\newtheorem{theorem}{Theorem}[section]
\newtheorem{lemma}[theorem]{Lemma}
\theoremstyle{definition}
\newtheorem{definition}[theorem]{Definition}
\theoremstyle{remark}
\numberwithin{equation}{section}
\newcommand\nutwid{\overset {\text{\lower 3pt\hbox{$\sim$}}}\nu}
\newcommand\omycite[1]{}
\newcommand{\beqs}{\begin{equation*}}
\newcommand{\eeqs}{\end{equation*}}
\newcommand{\beq}{\begin{equation}}
\newcommand{\eeq}{\end{equation}}
\renewcommand{\MR}[1]{\href{http://www.ams.org/mathscinet-getitem?mr={#1}}{MR{#1}}}
\begin{document}
\title[Congruences modulo powers of $3$ for $6$-colored generalized Frobenius partitions ]{Congruences modulo powers of $3$ for $6$-colored generalized Frobenius partitions }


\author{Dandan Chen}
\address{Department of Mathematics, Shanghai University, People's Republic of China}
\address{Newtouch Center for Mathematics of Shanghai University, Shanghai, People's Republic of China}
\email{mathcdd@shu.edu.cn}
\author{Siyu Yin*}
\address{Department of Mathematics, East China Normal University, People's Republic of China}
\email{siyuyin@shu.edu.cn, siyuyin0113@126.com}


\subjclass[2010]{ 11P83, 05A17}

\date{}


\keywords{Generalized Frobenius partitions; Congruences; Modular forms; Partitions }

\begin{abstract}
In $1984$, Andrews introduced the family of partition functions $c\phi_k(n)$, which enumerate generalized Frobenius partitions of $n$ with $k$ colors. In $2016$, Gu, Wang, and Xia established several congruences for $c\phi_6(n)$ and proposed a conjecture concerning congruences modulo powers of $3$ for this function. In this paper, we resolve a revised version of their conjecture by employing an approach analogous to that developed by Banerjee and Smoot.
\end{abstract}

\maketitle


\section{Introduction}
A partition of a positive integer $n$ is a non-increasing sequence of positive integers that sum to $n$. The number of such partitions is denoted by $p(n)$. Among the most celebrated results in partition theory are the following congruences, discovered and proved by Ramanujan \cite{Ramanujan-1919}:
\begin{align*}
p(5n+4)\equiv 0 &\pmod 5,\\
p(7n+5)\equiv 0 &\pmod 7,\\
p(11n+6)\equiv 0 &\pmod {11},
\end{align*}
for all non-negative integers $n$. In fact, for any integer $\alpha\geq1$,
\begin{align}
p(5^{\alpha}n+\delta_{5,\alpha})&\equiv 0\pmod {5^{\alpha}},\label{5-congruence}\\
p(7^{\alpha}n+\delta_{7,\alpha})&\equiv 0\pmod {7^{[\frac{\alpha+2}{2}]}},\label{7-congruence}\\
p(11^{\alpha}n+\delta_{11,\alpha})&\equiv 0\pmod {11^{\alpha}}\label{11-congruence},
\end{align}
where $\delta_{t,\alpha}$ is defined by the congruence $24\delta_{t,\alpha}\equiv1\pmod {t^{\alpha}}$. \eqref{5-congruence} and \eqref{7-congruence} were first proved by Watson \cite{Watson-1938} in 1938. Later,  elementary proofs of \eqref{5-congruence} and \eqref{7-congruence} were given by  Hirschhorn and Hunt \cite{Hirschhorn-1981} and  Garvan \cite{Garvan-1984}, respectively. The case for $11$, \eqref{11-congruence} was established by Atkin \cite{Atkin-1967} in 1967.

In his 1984 AMS Memoir \cite{Andrews-1984}, Andrews introduced the concept of generalized Frobenius partitions of $n$. It is defined as a two-rowed array of the form
\begin{align*}
\left(\begin{array}{cccc} a_1 & a_2 & \cdots & a_r \\
b_1 & b_2 & \cdots &b_r \end{array}\right)
\end{align*}
where  the entries $a_i$, $b_i$  are non-negative integers satisfying the condition
\begin{align*}
n=r+\sum_{i=1}^ra_i+\sum_{i=1}^rb_i.
\end{align*}
Furthermore, both sequences $(a_1,a_2,\cdots,a_r)$ and $(b_1,b_2,\cdots,b_r)$ must be non-increasing.

Andrews \cite{Andrews-1984} also introduced a variant known as the $k$-colored generalized Frobenius partition. It is an array of the form described above, but whose entries are drawn from $k$ distinct copies of the non-negative integers, each copy distinguished by a color. The entries in each row are ordered first by their numerical value and subsequently by their color, with the additional constraint that no two consecutive entries in any row are identical in both value and color.

 For any positive integer $k$, let $c\phi_k(n)$
denote the number of such $k$-colored generalized Frobenius partitions of $n$. Andrews \cite{Andrews-1984} established the following  generating function:
\begin{align*}
\sum_{n=0}^{\infty}c\phi_k(n)q^n=\prod_{n=1}^{\infty}\frac{1}{(1-q^n)^k}\sum_{m_1,m_2,\cdots,m_{k-1}=-\infty}^{\infty}q^{Q(m_1,m_2,\cdots,m_{k-1})},
\end{align*}
where
\begin{align*}
Q(m_1,m_2,\cdots,m_{k-1})=\sum_{j=1}^{k-1}m_j^2+\sum_{1\leq i\textless j\leq k-1}m_im_j.
\end{align*}

This paper focuses on generalized $6$-colored Frobenius partitions. Baruah and Sarmash \cite[Theorem 2.1]{Baruah-Sarmash-2015} expressed the generating function for $c\phi_6(n)$ in terms of Ramanujan's theta functions as follows:
\begin{align}
\sum_{n=0}^{\infty}c\phi_6(n)q^n=&\frac{1}{(q;q)_{\infty}^6}\{\varphi^3(q)\varphi(q^2)\varphi(q^6)+24q\psi^3(q)\psi(q^2)\psi(q^3)\\
\nonumber&+4q^2\varphi^3(q)\psi(q^4)\psi(q^{12})\},
\end{align}
where
\begin{align*}
\varphi(q)=(-q;q^2)_{\infty}^2(q^2;q^2)_{\infty},\\
\psi(q)=(-q;q^2)_{\infty}(q^4;q^4)_{\infty}.
\end{align*}
They \cite{Baruah-Sarmash-2015} also conjectured that for all $n\geq0$,
\begin{align}\label{B-S-con}
c\phi_6(3n+2)\equiv0\pmod{27}.
\end{align}
This conjecture was proved by Xia \cite{Xia-2015} using the generating function for $c\phi_6(3n+2)$ given by Baruah and Sarmah. Xia further conjectured that
\begin{align*}
c\phi_6(9n+7)\equiv0\pmod{27},
\end{align*}
which was subsequently established by Hirschhorn  \cite{Hirschhorn-2015}.

 In 2016, Gu, Wang, and Xia \cite{Gu-Wang-Xia-2016} proved several congruences modulo powers of 3 for $c\phi_6(n)$ by employing the generating function for $c\phi_6(3n+1)$ provided by Hirschhorn \cite{Hirschhorn-2015}. For example, they showed that for all $n\geq0$,
\begin{align*}
c\phi_6(19683n+11482)&\equiv0\pmod{3^7},\\
c\phi_6(59049n+44287)&\equiv0\pmod{3^7}.
\end{align*}
Additonally, they proposed the following conjecture \cite[Conjecture 1.2]{Gu-Wang-Xia-2016}: for any integer $j\geq8$, there exist positive integers $a$ and $k$ such that for all $n\geq0$,
\begin{align}
c\phi_6(3^kn+a)\equiv0\pmod{3^j}.
\end{align}
Tang \cite[Conjecture 1.4]{Tang} confirmed the conjecture in the following explicit form, though without providing a proof:
\begin{theorem}\label{main-result}
For integers $n\geq0$, and $\alpha\geq1$,
\begin{align}
c\phi_6(3^{\alpha}n+\lambda_{\alpha})\equiv0\pmod{3^{\lfloor\frac{\alpha}{2}\rfloor+2}},
\end{align}
where
\begin{align*}
\lambda_{2\alpha-1}=\frac{3^{2\alpha-1}+1}{4}      ~\text{and}~
\lambda_{2\alpha}=\frac{3^{2\alpha+1}+1}{4}.
\end{align*}
\end{theorem}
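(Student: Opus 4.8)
The plan is to follow the localized modular-function framework of Banerjee and Smoot, reducing the infinite family to a single recurrence together with an induction that tracks $3$-adic valuations. First I would record the base object from the theta-function generating function above, rewriting it as an eta-quotient, and define the two interleaved sequences of generating functions
$$
L_{2\alpha-1}(q) = \sum_{n\ge 0} c\phi_6\!\left(3^{2\alpha-1}n + \lambda_{2\alpha-1}\right) q^n, \qquad L_{2\alpha}(q) = \sum_{n\ge 0} c\phi_6\!\left(3^{2\alpha}n + \lambda_{2\alpha}\right) q^n.
$$
Since extracting an arithmetic progression of modulus $3$ is effected by the operator $U_3\big(\sum a_n q^n\big) = \sum a_{3n}\,q^n$, each $L_{\alpha+1}$ arises from $L_\alpha$ by applying $U_3$ to a fixed eta-quotient twist chosen to realign the progression; iterating this produces the whole family from $L_1$.

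Next I would install the modular machinery. The theta constants in the generating function have bounded level, so the natural arena is the genus-zero modular curve $X_0(N)$ for the relevant $N$, equipped with a Hauptmodul $t$. Working in the ring $\mathbb{Z}_{(3)}[t]$ of $3$-locally-integral polynomials in $t$ (together with a fixed weight/cusp-carrying form $z$), I would write each $L_\alpha = f_\alpha\, z$ with $f_\alpha$ in the localized ring. The decisive structural input is an explicit formula for the action of $U_3$ on the monomial basis, an identity $U_3(t^i z) = \sum_j c_{ij}\, t^j z$ with $3$-adically controlled coefficients $c_{ij}$; this converts the analytic operator into a concrete, valuation-sensitive linear recurrence on coefficient vectors, exactly as in the Paule--Radu and Smoot treatments.

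With the $U_3$-matrix in hand, the theorem becomes an induction. Following Banerjee--Smoot, I would fix a distinguished sequence of functions $\{m_d\}_{d\ge 1}$ adapted to the $U_3$-action and expand $L_\alpha$ in this basis; the claim to prove by induction is that the coefficient of $m_d$ in $L_\alpha$ is divisible by $3^{\lfloor \alpha/2\rfloor + 2 + g(d)}$ for an explicit increasing gain function $g$, so that in particular the term governing every coefficient of the progression carries at least $3^{\lfloor\alpha/2\rfloor + 2}$. The two formulas $\lambda_{2\alpha-1}$ and $\lambda_{2\alpha}$ must be advanced in tandem, because the $U_3$-matrix guarantees an extra factor of $3$ only on alternate applications; this is the exact mechanism that yields one new power of the modulus for every two increments of $\alpha$, reproducing the floor function precisely as in Watson's treatment of the mod-$7$ congruences for $p(n)$.

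The hard part will be establishing the $U_3$-action formula together with its sharp $3$-adic estimates, and choosing the basis $\{m_d\}$ so that the valuations provably propagate at the correct rate. Obtaining the exact exponent $\lfloor\alpha/2\rfloor + 2$, rather than a weaker bound, hinges on showing that no valuation is lost across the two-step cycle: this requires a careful bookkeeping lemma quantifying the gain and loss of factors of $3$ under each application of $U_3$, anchored by an exact computation of the initial functions $L_1$ and $L_2$ to start the induction.
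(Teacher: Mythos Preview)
Your proposal is correct and follows essentially the same route as the paper, which works on $\Gamma_0(12)$ with functions $t$ and $y=1/(1-3t)$, alternates two twisted $U_3$-operators $U_A,\,U_B$ to build the sequence $L_\alpha$, establishes the $U_3$-action on the basis $y^m t^k$ with explicit $3$-adic bounds, and then proves by induction that the $t$-coefficients $d_n^{(\alpha)}$ have $3$-adic valuation at least $\lfloor(2n+5)/3\rfloor+\lfloor\alpha/2\rfloor$---precisely your gain-function scheme. The one implementation detail worth flagging is that the prefactor carried by $L_\alpha$ involves $y^{3^{\alpha+1}-c}$ with exponent growing in $\alpha$, so the relevant localization is at powers of $(1-3t)$ rather than at the prime $3$ in $\mathbb{Z}$ as you wrote.
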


Our proof follows an approach similar to Banerjee and Smoot \cite{Banerjee-Smoot-2025}, utilizing the recently developed localization method and internal algebraic structure. The paper is organized as follows. In Section \ref{sec-pre} we introduce the necessary notations and definitions. In Section \ref{sec-lemma} we derive modular equations on $\Gamma_0(12)$. Section \ref{sec-main} presents the proof of Theorem \ref{main-result}.

\section{Preliminaries}\label{sec-pre}

In this paper, we adhere to the following conventions. The sets of integers, nonegative integers, and positive integers are denoted by $\mathbb{Z}$, $\mathbb{N}=\{0,1,2,\cdots\}$, and $\mathbb{N^\ast}=\{1,2,\cdots\}$, respectively. Let $\mathbb{H}:=\{\tau\in\mathbb{C}:\Im(\tau)\textgreater 0\}$ represent the complex upper half plane. We use the notation $\eta_n(\tau):=\eta(n\tau),~~n\in\mathbb{Z},~\tau\in\mathbb{H}$. For $x\in\mathbb{R}$, $\lfloor x\rfloor$ denotes the largest integer less than or equal to $x$, and $\lceil x\rceil$ denotes the smallest integer greater than or equal to $x$. For $n\in\mathbb{Z}$, $\pi(n)$ denotes  the 3-adic order of n (i.e. the highest power of 3 that divides $n$).

 Let $f=\sum_{n\in\mathbb{Z}}a_nq^n$ (with $f\neq0$) be a series such that  $a_n=0$ for all but finitely many $n\textless 0$. We define the order of $f$ (with respect to $q$) as the smallest integer $N$ for which $a_N\neq0$, and write $ord_q(f)=N$. Furthermore, if $F=f\circ t=\sum_{n\in\mathbb{Z}}a_nt^n$, where $t=\sum_{n\geq 1}b_nq^n$, then the $t$-order of $F$ is defined as the smallest integer $N$ such that $a_N\neq 0$, denoted  $N=ord_t(F)$.


\begin{definition}
For $f:\mathbb{H}\rightarrow\mathbb{C}$ and $m\in \mathbb{N}^{\ast}$ we define $U_m(f):\mathbb{H} \rightarrow \mathbb{C}$ by
\begin{align*}
U_m(f)(\tau):=\frac{1}{m}\sum_{\lambda=0}^{m-1}f(\frac{\tau+\lambda}{m}),~\tau\in\mathbb{H}.
\end{align*}
\end{definition}
$U_m$ is linear (over $\mathbb{C}$); in addition, it is easy to verify that
\begin{align*}
U_{mn}=U_m\circ U_n=U_n\circ U_m,~m,n\in\mathbb{N}^{\ast}.
\end{align*}
The operators $U_m$, introduced by Atkin and Lehner \cite{Atkin-Lehner-1970}, are closely related to Hecke operators. They typically arise in the context of partition congruences \cite{Andrews-1976} mostly because of the property: if
\begin{align*}
f(\tau)=\sum_{n=-\infty}^{\infty}f_nq^n~~(q=e^{2\pi i\tau}),
\end{align*}
then
\begin{align*}
U_m(f)(\tau)=\sum_{n=-\infty}^{\infty}f_{mn}q^n.
\end{align*}

\begin{definition}\label{def-A-B}
For $f:\mathbb{H}\rightarrow\mathbb{C}$ we define $U_A(f), U_B(f):\mathbb{H} \rightarrow \mathbb{C}$ by $U_A(f):=U_3(Af)$ and $U_B(f):=U_3(Bf)$, where
\begin{align*}
A:=\frac{\eta_9^9\eta_4^2\eta_2^5}{\eta_{36}^2\eta_{18}^5\eta_1^9}~~\text{and}~~ B:=\frac{\eta_9\eta_2^2}{\eta_{18}^2\eta_1}.
\end{align*}
\end{definition}

\begin{definition}
\label{r:defL}
We define the $U$-sequence $(L_{\alpha})_{\alpha\geq0}$ by
\begin{align*}
L_0:=\frac{\eta_{12}^5\eta_3\eta_2^8}{\eta_{24}^2\eta_8^2\eta_6^4\eta_4^3\eta_1^3}+24+4\frac{\eta_{24}^2\eta_8^2\eta_3\eta_2^{10}}{\eta_{12}\eta_6^2\eta_4^9\eta_1^3} \end{align*}
and for $\alpha\geq 1$:
\begin{align*}
L_{2\alpha-1}:=U_A(L_{2\alpha-2})~~~~~\text{and}~~~L_{2\alpha}:=U_B(L_{2\alpha-1}).
\end{align*}
\end{definition}

The proof of the following lemma is completely analogous to \cite[p. 23]{Atkin-1967} and we omit it.
\begin{lemma}\label{lem-L-alpha}
For $\alpha \in \mathbb{N}^{\ast}$, we have
\begin{align*}
&L_{2\alpha-1}=q^{-1}\prod_{n=1}^{\infty}\frac{(1-q^n)(1-q^{3n})^9}{(1-q^{2n})^2(1-q^{6n})^5(1-q^{12n})^2}\sum_{n=0}^{\infty}c\phi_6(3^{2\alpha-1}n+\lambda_{2\alpha-1})q^n,\\
&L_{2\alpha}=\prod_{n=1}^{\infty}\frac{(1-q^{n})^9(1-q^{3n})}{(1-q^{2n})^5(1-q^{4n})^2(1-q^{6n})^2}\sum_{n=0}^{\infty}c\phi_6(3^{2\alpha}n+\lambda_{2\alpha})q^n.
\end{align*}
\end{lemma}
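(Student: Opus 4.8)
The plan is to prove the two formulas simultaneously by induction on $\alpha$, driven by the single algebraic mechanism behind all such results: the operator identity
\[
U_3\bigl(g(q^3)\,h(q)\bigr)=g(q)\,U_3(h)(q),
\]
valid whenever the first factor is a power series in $q^3$. Write $\Phi(q):=\sum_{n\ge 0}c\phi_6(n)q^n$ and let
\[
P_e(q):=\prod_{n\ge1}\frac{(1-q^n)^9(1-q^{3n})}{(1-q^{2n})^5(1-q^{4n})^2(1-q^{6n})^2},\qquad
P_o(q):=\prod_{n\ge1}\frac{(1-q^n)(1-q^{3n})^9}{(1-q^{2n})^2(1-q^{6n})^5(1-q^{12n})^2}
\]
be the two prefactors appearing in the statement. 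The first preparatory step is to rewrite the Baruah--Sarmah generating function in eta-quotient form, using $\varphi(q)=\eta_2^5/(\eta_1^2\eta_4^2)$ and $\psi(q)=\eta_2^2/\eta_1$, and to check that each of the three summands defining $L_0$ equals $q^{-1}P_e$ times the corresponding summand of $\Phi$; summing yields the clean base identity
\[
L_0=q^{-1}\,P_e(q)\,\Phi(q).
\]
The delicate point here is bookkeeping of the intrinsic $q$-orders forced by the convention $\eta_n(\tau)=\eta(n\tau)$: the three summands of $L_0$ carry orders $-1$, $0$, $+1$ respectively, and it is exactly these that resynchronize the pieces into one prefactor.

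Next I would record the two modular equations that let $U_3$ pass through the prefactors. Matching eta-exponents and tracking $\ord_q A=-3$ and $\ord_q B=-1$ gives
\[
A\,P_e(q)=q^{-3}\,P_o(q^3),\qquad B\,P_o(q)=q^{-1}\,P_e(q^3),
\]
where $P_e(q^3),P_o(q^3)$ denote the prefactors with $q$ replaced by $q^3$ (equivalently $\eta_n\mapsto\eta_{3n}$). These are the crux: because both right-hand sides are power series in $q^3$ times an explicit monomial in $q$, the pullout identity above becomes applicable after the multiplication by $A$ or $B$ demanded by $U_A,U_B$.

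The induction then runs smoothly. Put $G_\beta(q):=\sum_{n\ge0}c\phi_6(3^\beta n+\lambda_\beta)q^n$, so that the claim reads $L_{2\alpha-1}=q^{-1}P_o\,G_{2\alpha-1}$ and $L_{2\alpha}=P_e\,G_{2\alpha}$. For the base case,
\[
L_1=U_3(A\,L_0)=U_3\bigl(q^{-3}P_o(q^3)\cdot q^{-1}\Phi\bigr)=q^{-1}P_o\,U_3(q^{-1}\Phi)=q^{-1}P_o\sum_{n\ge0}c\phi_6(3n+1)q^n,
\]
which is the statement since $\lambda_1=1$. For the inductive step, assuming $L_{2\alpha-2}=P_e\,G_{2\alpha-2}$, applying $U_A$ and the first modular equation makes the factor $q^{-3}$ emerge from $U_3$ as $q^{-1}$, giving $L_{2\alpha-1}=q^{-1}P_o\,U_3(G_{2\alpha-2})$; since $\lambda_{2\alpha-1}=\lambda_{2\alpha-2}$, the $n\equiv0$ extraction $U_3(G_{2\alpha-2})$ is precisely $G_{2\alpha-1}$. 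Applying $U_B$ and the second modular equation similarly yields $L_{2\alpha}=P_e\,U_3\bigl(q^{-2}G_{2\alpha-1}\bigr)$ (the $q^{-1}$ from $L_{2\alpha-1}$ and the $q^{-1}$ from $B$ combine while $P_e(q^3)$ pulls out); since $\lambda_{2\alpha}=\lambda_{2\alpha-1}+2\cdot3^{2\alpha-1}$, the factor $q^{-2}$ selects the $n\equiv2$ progression and $U_3(q^{-2}G_{2\alpha-1})=G_{2\alpha}$, closing the induction.

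I expect the main obstacle to be entirely in the two preparatory computations rather than in the induction. Establishing $L_0=q^{-1}P_e\Phi$ requires a careful theta-to-eta reduction of all three summands of the generating function, and --- more importantly --- verifying that $A\,P_e$ and $B\,P_o$ collapse to power series in $q^3$ times a monomial is the whole reason the scheme works: were these products not $q^3$-rational, $U_3$ could not be commuted past the prefactor. Pinning down the exact monomials $q^{-3}$ and $q^{-1}$, which hinge on the intrinsic $q$-orders of $A$ and $B$, is the step where a power slip would be easiest to make and would silently destroy the $q^{-1}$ in the odd-index formula.
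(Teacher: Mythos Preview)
Your proof is correct and is exactly the approach the paper has in mind: the paper omits the argument entirely, stating only that it is ``completely analogous to \cite[p.~23]{Atkin-1967}'', and your induction via the pull-through identity $U_3(g(q^3)h(q))=g(q)\,U_3(h)$ together with the two prefactor reductions $A\,P_e(q)=q^{-3}P_o(q^3)$ and $B\,P_o(q)=q^{-1}P_e(q^3)$ is precisely that Atkin-type argument. The base identity $L_0=q^{-1}P_e\,\Phi$, the two prefactor identities, and the arithmetic $\lambda_{2\alpha-1}=\lambda_{2\alpha-2}$, $\lambda_{2\alpha}=\lambda_{2\alpha-1}+2\cdot3^{2\alpha-1}$ all check, so there is nothing to add.
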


\begin{definition}\label{t-p-y}
Let $t, y, p_0, p_1$ be functions defined on $\mathbb{H}$ as follows:
\begin{align*}
t:=\frac{\eta_{12}^4\eta_2^2}{\eta_6^2\eta_4^4},~~~y:=\frac{\eta_4^3\eta_3}{\eta_{12}\eta_1^3},~~~p_0:=\frac{\eta_{12}^4\eta_3^{12}\eta_2^8}{\eta_6^8\eta_4^{12}\eta_1^4},~~~p_1:=\frac{\eta_{12}^2\eta_3^6\eta_2^4}{\eta_6^4\eta_4^6\eta_1^2},
\end{align*}
 which have Laurent series expansions in powers of $q$ with coefficients in $\mathbb{Z}$.
\end{definition}

\section{Modular equation}
\label{sec-lemma}

Our proof of the Gu-Wang-Xia conjecture relies on the modular identities in the Appendix.  Many of these, specifically those in Groups $I$-$III$, can be automatically verified using Garvan's MAPLE package ETA (see \eqref{r:eta} and \cite{gtutorial})
\begin{align}
\label{r:eta}
https://qseries.org/fgarvan/qmaple/ETA/
\end{align}
For example,  the expression $L_0=t^{-1}+27+3t+9t^2$, where $L_0$ is defined  in  Definition \ref{r:defL}, can be directly verified using this package.
The package also yields the identity for $L_1$:
\begin{align}\label{L-1}
L_1=&y^8p_0(4\cdot3^2t^{-1}+71\cdot3^4+2351\cdot3^3t+89\cdot3^5t^2-1975\cdot3^4t^3+407\cdot3^5t^4\\
\nonumber&-19\cdot3^7t^5+11\cdot3^7t^6-3^8t^7).
\end{align}

\begin{definition}\label{def-aj-t}
Let $t=t(\tau)$ be as in Definition \ref{t-p-y}. We define:
\begin{align*}
a_0(t)=-t,~~~a_1(t)=3t^2,~~~a_2(t)=-9t^3+6t.
\end{align*}
Define $s:\{0,1,2\}\times\{1,2,3\}\rightarrow\mathbb{Z}$ to be the unique function satisfying
\begin{align}\label{def-aj}
a_j(t)=\sum_{l=1}^3 s(j,l)3^{\lfloor\frac{2l+j}{3}\rfloor}t^l.
\end{align}
\end{definition}

\begin{lemma}\label{t-aj}
For $\lambda\in\{0,1,2\}$, let
\begin{align*}
t_{\lambda}(\tau):=t(\frac{\tau+\lambda}{3}),~~\tau\in\mathbb{H}.
\end{align*}
Then in the polynomial ring $\mathbb{C}(t)[X]$:
\begin{align*}
X^3+a_2(t)X^2+a_1(t)X+a_0(t)=(X-t_0)(X-t_1)(X-t_2).
\end{align*}
\end{lemma}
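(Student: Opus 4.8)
The plan is to read the claimed factorization as three identities among elementary symmetric functions and to pin those down by modularity. Expanding $(X-t_0)(X-t_1)(X-t_2)$ and matching coefficients with Definition \ref{def-aj-t}, the statement is equivalent to
$t_0+t_1+t_2=-a_2(t)=9t^3-6t$, $\;t_0t_1+t_0t_2+t_1t_2=a_1(t)=3t^2$, and $t_0t_1t_2=-a_0(t)=t$.
First I would record the basic analytic data of $t$ from Definition \ref{t-p-y}: as an eta-quotient its leading $q$-exponent is $\tfrac{1}{24}(12\cdot4+2\cdot2-6\cdot2-4\cdot4)=1$, so $t=q+O(q^{3})$, and $t$ is a modular function on $\Gamma_0(12)$. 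Consequently, writing $\zeta_3=e^{2\pi i/3}$, each conjugate expands as $t_\lambda=\zeta_3^{\lambda}q^{1/3}+\cdots$, a Puiseux series in $q^{1/3}$; in particular the three $t_\lambda$ are pairwise distinct, so $t_0$ generates a cubic extension of $\mathbb{C}(t)$ and the $\mathbb{C}(t)[X]$ framing of the lemma is the natural one.

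Next I would pass to the power sums $p_k:=t_0^{\,k}+t_1^{\,k}+t_2^{\,k}$. By the definition of $U_3$ we have $p_k=3\,U_3(t^k)$, and since $3\mid 12$ the operator $U_3$ sends modular functions on $\Gamma_0(12)$ to modular functions on $\Gamma_0(12)$; hence each $p_k$ is again modular on $\Gamma_0(12)$. Equivalently, the substitution $\tau\mapsto\tau+1$ cyclically permutes $t_0\to t_1\to t_2\to t_0$, which already shows every symmetric function of the $t_\lambda$ has a $q$-expansion in integral powers of $q$ and descends to $\Gamma_0(12)$. Because $X_0(12)$ has genus zero with hauptmodul $t$, each $p_k$, and hence via Newton's identities ($e_1=p_1$, $2e_2=p_1^2-p_2$, $6e_3=p_1^3-3p_1p_2+2p_3$) each elementary symmetric function $e_k$, is a rational function of $t$. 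An analysis of the orders at the cusps of $\Gamma_0(12)$ — the only points where these functions can have poles — then forces each $e_k$ to be a Laurent polynomial in $t$ of exactly the degree appearing on the right-hand side above. The extreme case $e_3=t_0t_1t_2$ is cleanest: it is a single product of eta-quotients, which the multiplication formula for $\eta$ collapses directly to $t$.

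Finally, to confirm the explicit polynomials it is enough to perform a finite check. For each of the three identities the difference of the two sides is a modular function on $\Gamma_0(12)$ whose poles are confined to the cusps, with total order bounded explicitly by the degrees already established; by the valence formula such a function is identically zero as soon as its $q$-expansion vanishes to a computable order $N$. Comparing $q$-expansions up to order $N$ is precisely the type of eta-quotient identity certified by Garvan's ETA package \eqref{r:eta}, and this completes the proof. I expect the main obstacle to be the cusp bookkeeping for the conjugates $t_\lambda$ on $\Gamma_0(12)$ (equivalently, verifying that $[\mathbb{C}(t_0):\mathbb{C}(t)]=3$ and that no spurious poles occur at finite points of $X_0(12)$): it is this step that simultaneously forces the symmetric functions to be honest polynomials in $t$ of the stated degrees and furnishes the explicit truncation bound $N$ that turns the $q$-series comparison into a rigorous verification rather than a numerical coincidence.
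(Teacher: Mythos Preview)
Your proposal is correct and close to the paper's argument: both compute $e_3=t_0t_1t_2=t$ by collapsing the triple eta-product (the paper carries this out explicitly via $\prod_{\lambda=0}^{2}(1-\omega^{\lambda n}z)=(1-z)^3$ or $1-z^3$ according as $3\mid n$ or not), and both finish with an ETA-package verification. The difference is that the paper bypasses entirely the hauptmodul and cusp bookkeeping you flag as the main obstacle. With $e_3=t$ in hand it writes $e_1=3\,U_3(t)$ and, more cleverly, $e_2=e_3\sum_\lambda t_\lambda^{-1}=3t\,U_3(t^{-1})$, and then simply verifies the two bare identities $U_3(t)=3t^3-2t$ and $U_3(t^{-1})=t$ with the package; no Newton identities, no power sums beyond $p_1$, and no a~priori degree bound are invoked, since the ETA routine supplies its own valence bound for each identity it certifies. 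Your more structural route---establishing $e_k\in\mathbb{C}(t)$ via the hauptmodul and then bounding poles at the cusps---yields a template that would transfer verbatim to other genus-zero levels, but for this lemma the paper's $U_3(t^{-1})$ shortcut is what reduces the argument to three lines.
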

\begin{proof}
We first  prove that
\begin{align*}
\prod_{\lambda=0}^2t_{\lambda}(\tau)=-a_0(t)=t.
\end{align*}
Let $\omega:=e^{\frac{2\pi i}{3}}$. Then
\begin{align*}
\prod_{\lambda=0}^2t_{\lambda}(\tau)&=\prod_{\lambda=0}^2q^{\frac{1}{3}}\omega^{\lambda}\prod_{n=1}^{\infty}\frac{(1-q^{4n})^4(1-\omega^{2\lambda n}q^{\frac{2n}{3}})^2}{(1-q^{2n})^2(1-\omega^{4\lambda n}q^{\frac{4n}{3}})^4}\\
&=q\prod_{n=1}^{\infty}\prod_{\lambda=0}^2\frac{(1-q^{4n})^4(1-\omega^{2\lambda n}q^{\frac{2n}{3}})^2}{(1-q^{2n})^2(1-\omega^{4\lambda n}q^{\frac{4n}{3}})^4}\\
&=q\prod_{n=1}^{\infty}\frac{(1-q^{4n})^{12}}{(1-q^{2n})^6}\prod_{n=1}^{\infty}\frac{(1-q^{2n})^2(1-q^{12n})^4}{(1-q^{6n})^2(1-q^{4n})^4}\prod_{n=1}^{\infty}\frac{(1-q^{2n})^{6}}{(1-q^{4n})^{12}}\\       &=t,
\end{align*}
where we  use the fact that $\prod_{\lambda=0}^2(1-\omega^{\lambda n}z)$ equals $(1-z)^3$ if $3\mid n$, and $1-z^3$ otherwise.
For $a_1(t)$, we have
\begin{align*}
a_1(t)=&t_0(\tau)t_1(\tau)+t_0(\tau)t_2(\tau)+t_1(\tau)t_2(\tau)
=3tU_3(t^{-1})
=3t^2,
\end{align*}
where the last equation was verified using the ETA MAPLE package.
For $a_2(t)$, we have
\begin{align*}
a_2(t)=&-t_0(\tau)-t_1(\tau)-t_2(\tau)
=-3U_3(t)
=-9t^3+6t,
\end{align*}
where the last equation was again verified using the ETA MAPLE package.
\end{proof}

Similarly, we can state the following definition and lemma without detailed proofs.
\begin{definition}\label{def-bj-y}
With $y=y(\tau)$ as in Definition \ref{t-p-y} we define:
\begin{align*}
b_0(y)=-y^3,~~~b_1(y)=8y^3-3y^2-3y+1,~~~b_2(y)=-16y^3+12y^2+3y-2.
\end{align*}
\end{definition}

\begin{lemma}\label{y-bj}
For $\lambda\in\{0,1,2\}$, let
\begin{align*}
y_{\lambda}(\tau):=y(\frac{\tau+\lambda}{3}),~~\tau\in\mathbb{H}.
\end{align*}
Then in the polynomial ring $\mathbb{C}(y)[X]$:
\begin{align*}
X^3+b_2(y)X^2+b_1(y)X+b_0(y)=(X-y_0)(X-y_1)(X-y_2).
\end{align*}
\end{lemma}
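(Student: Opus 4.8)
The plan is to establish the claimed factorization by showing that the three Galois-type conjugates $y_0, y_1, y_2$ are precisely the roots of the monic cubic $X^3 + b_2(y)X^2 + b_1(y)X + b_0(y)$ over the field $\mathbb{C}(y)$. Following the template set by Lemma \ref{t-aj}, it suffices to verify that the elementary symmetric functions of $y_0, y_1, y_2$ coincide with the coefficients $-b_2(y)$, $b_1(y)$, and $-b_0(y)$; that is,
\begin{align*}
y_0 + y_1 + y_2 &= 16y^3 - 12y^2 - 3y + 2, \\
y_0 y_1 + y_0 y_2 + y_1 y_2 &= 8y^3 - 3y^2 - 3y + 1, \\
y_0 y_1 y_2 &= y^3.
\end{align*}

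First I would compute the product $y_0 y_1 y_2$ directly from the eta-quotient definition $y = \eta_4^3\eta_3/(\eta_{12}\eta_1^3)$, exactly as in the proof of Lemma \ref{t-aj}. Writing out $y_\lambda$ as an infinite product in $q^{1/3}$ with the root-of-unity twists $\omega^\lambda$ and collecting over $\lambda = 0,1,2$, the cyclotomic identity $\prod_{\lambda=0}^{2}(1 - \omega^{\lambda n} z) = (1-z)^3$ when $3 \mid n$ and $1 - z^3$ otherwise will collapse the triple product into a single eta-quotient in $q$. I expect this to yield exactly $y^3$, matching $-b_0(y)$. For the remaining two symmetric functions, I would rewrite them in terms of the $U_3$ operator: the first power sum satisfies $y_0 + y_1 + y_2 = 3\,U_3(y)$, and by an analogous manipulation the second symmetric function can be expressed as a $U_3$-image of a suitable monomial in $y$ (up to a factor of $y$, mirroring the relation $\sum_{i<j} t_i t_j = 3t\,U_3(t^{-1})$ used for $a_1$). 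Each resulting identity between an eta-quotient $U_3$-image and a polynomial in $y$ is then a finite modular-function identity on $\Gamma_0(12)$ that can be confirmed with Garvan's ETA package, precisely as the paper does for $a_1(t)$ and $a_2(t)$.

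The main obstacle is not conceptual but organizational: one must correctly identify the modular curve and level on which $y$ and its conjugates live so that the $U_3$-images are again algebraic over $\mathbb{C}(y)$, and then pin down the exact eta-quotient representing each symmetric function before invoking the package. In particular, the middle symmetric function $y_0 y_1 + y_0 y_2 + y_1 y_2$ is the most delicate, since expressing it as a clean $U_3$-image requires choosing the right auxiliary multiplier (the analogue of the factor $3t$ in the $t$-case); an incorrect normalization would produce a $U_3$ of a function with the wrong behavior at the cusps and the verification would fail. Once the three symmetric-function identities are verified, the factorization follows immediately by comparing coefficients, and the lemma is proved; this is why the paper is content to state it "without detailed proofs," the computations being entirely parallel to those of Lemma \ref{t-aj}.
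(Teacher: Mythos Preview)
Your proposal is correct and follows exactly the approach the paper intends: the paper omits the proof entirely, stating only that the definition and lemma are obtained ``similarly'' to Lemma~\ref{t-aj}, i.e., by verifying the three elementary symmetric functions of $y_0,y_1,y_2$ via the product computation and the ETA package. Your identification of the middle symmetric function as $y_0y_1+y_0y_2+y_1y_2 = y_0y_1y_2\cdot(y_0^{-1}+y_1^{-1}+y_2^{-1}) = 3y^3\,U_3(y^{-1})$ (using $y_0y_1y_2=y^3$) is the correct analogue of the $3t\,U_3(t^{-1})$ trick, so there is no genuine obstacle.
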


\begin{lemma}\label{U2-j-jl2}
For $u:\mathbb{H}\rightarrow\mathbb{C}$, $j\in\mathbb{Z}$, and $l\in\mathbb{N}$ :
\begin{align}
\label{re-t}&U_3(ut^{j+3})=-a_0(t)U_3(ut^{j})-a_1(t)U_3(ut^{j+1})-a_2(t)U_3(ut^{j+2});\\
\label{re-y}&U_3(uy^{l+3})=-b_0(y)U_3(uy^l)-b_1(y)U_3(uy^{l+1})-b_2(y)U_3(uy^{l+2}).
\end{align}
\end{lemma}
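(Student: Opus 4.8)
The plan is to reduce each identity to the factorizations already established in Lemmas \ref{t-aj} and \ref{y-bj}. I focus on \eqref{re-t}; the argument for \eqref{re-y} is identical with $t_\lambda$, $a_j$ replaced by $y_\lambda$, $b_j$. By Lemma \ref{t-aj}, the three functions $t_0, t_1, t_2$ are precisely the roots of the monic cubic $X^3 + a_2(t)X^2 + a_1(t)X + a_0(t)$ over $\mathbb{C}(t)$. Hence each $t_\lambda$ satisfies
\begin{align*}
t_\lambda^{\,3} = -a_2(t)\,t_\lambda^{\,2} - a_1(t)\,t_\lambda - a_0(t),
\end{align*}
where $a_0, a_1, a_2$ depend only on $t$ (equivalently, only on $\tau$, not on $\lambda$). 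This is the single algebraic fact that drives the whole computation.

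First I would record the action of $U_3$ on a product in terms of the shifted functions. From the definition of $U_3$, for any $g:\mathbb{H}\to\mathbb{C}$ one has $U_3(g)(\tau) = \tfrac13\sum_{\lambda=0}^{2} g\!\left(\tfrac{\tau+\lambda}{3}\right)$, and since $t\!\left(\tfrac{\tau+\lambda}{3}\right) = t_\lambda(\tau)$ by definition, it follows that
\begin{align*}
U_3\!\left(u\,t^{m}\right)(\tau) = \frac13\sum_{\lambda=0}^{2} u\!\left(\tfrac{\tau+\lambda}{3}\right) t_\lambda(\tau)^{m}
\end{align*}
for every integer exponent $m$. Writing $u_\lambda(\tau) := u\!\left(\tfrac{\tau+\lambda}{3}\right)$ for brevity, I next fix the exponent $j\in\mathbb{Z}$ and multiply the recurrence $t_\lambda^{\,3} = -a_2 t_\lambda^{\,2} - a_1 t_\lambda - a_0$ by $u_\lambda\, t_\lambda^{\,j}$, giving
\begin{align*}
u_\lambda\, t_\lambda^{\,j+3} = -a_2(t)\,u_\lambda\, t_\lambda^{\,j+2} - a_1(t)\,u_\lambda\, t_\lambda^{\,j+1} - a_0(t)\,u_\lambda\, t_\lambda^{\,j}.
\end{align*}

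The key step is then to sum this identity over $\lambda\in\{0,1,2\}$ and divide by $3$. Because the coefficients $a_0(t), a_1(t), a_2(t)$ are functions of $\tau$ alone and are therefore independent of the summation index $\lambda$, they pull outside the sum, and each resulting sum $\tfrac13\sum_\lambda u_\lambda\, t_\lambda^{\,m}$ is exactly $U_3(u\,t^{m})$ by the display above. This yields
\begin{align*}
U_3(u\,t^{j+3}) = -a_2(t)\,U_3(u\,t^{j+2}) - a_1(t)\,U_3(u\,t^{j+1}) - a_0(t)\,U_3(u\,t^{j}),
\end{align*}
which is precisely \eqref{re-t}. The reduction for \eqref{re-y} is word-for-word the same, using Lemma \ref{y-bj} in place of Lemma \ref{t-aj}. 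I do not anticipate a genuine obstacle here, since the heavy lifting — verifying that $t_0,t_1,t_2$ satisfy a cubic with $t$-rational coefficients — was already carried out in Lemma \ref{t-aj} via the ETA package; the only point requiring care is the observation that the $a_j(t)$ are constant in $\lambda$, which is what permits interchanging them with the averaging operator $U_3$.
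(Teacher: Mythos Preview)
Your proof is correct and follows essentially the same approach as the paper: use Lemma~\ref{t-aj} to write $t_\lambda^3 + a_2(t)t_\lambda^2 + a_1(t)t_\lambda + a_0(t) = 0$, multiply by $u_\lambda t_\lambda^j$, and sum over $\lambda$, with the analogous argument for $y$. The only difference is that you spell out more explicitly why the $a_j(t)$ can be pulled outside the sum, which the paper leaves implicit.
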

\begin{proof}
For $\lambda\in\{0,1,2\}$, Lemma \ref{t-aj} implies
\begin{align*}
t_{\lambda}^3+a_2(t)t_{\lambda}^2+a_1(t)t_{\lambda}+a_0(t)=0.
\end{align*}
Multiplying both sides by $u_{\lambda}t_{\lambda}^{j}$, where $u_{\lambda}(\tau):=u((\tau+\lambda)/3)$, yields
\begin{align*}
u_{\lambda}t_{\lambda}^{j+3}+a_2(t)u_{\lambda}t_{\lambda}^{j+2}+a_1(t)u_{\lambda}t_{\lambda}^{j+1}+a_0(t)u_{\lambda}t_{\lambda}^{j}=0.
\end{align*}
Summing both sides over all $\lambda\in\{0,1,2\}$ completes the proof of \eqref{re-t}. The identity \eqref{re-y} can be proved similarly.
\end{proof}

In the following proofs, we need to use both \eqref{re-t} and \eqref{re-y}. However, the coefficients of $y^mt^k$ are not easily described, where $m\in\mathbb{N}$ and $k\in\mathbb{Z}$. According to Group II in the Appendix,  by Lemma \ref{order-2}, it is possible to assume that $U_A(p_1y^mt^k)$ can be written in the form $y^{3m+8}p_0\sum_na_nt^n$ with $a_n\in\mathbb{Z}$. Based on the identity $y=\frac{1}{1-3t}$, we can rewrite $b_0(y)$, $b_1(y)$, and $b_2(y)$ as:
\begin{align}
\label{b-y-0}b_0(y)&=-y^9(1-2\cdot3^2t+5\cdot3^3t^2-20\cdot3^3t^3+5\cdot3^5t^4-2\cdot3^6t^5+3^6t^6);\\
\label{b-y-1}b_1(y)&=-y^6(-3+3^2t+3^4t^2-14\cdot3^3t^3+3^5t^4+3^6t^5-3^6t^6);\\
\label{b-y-2}b_2(y)&=-y^3(3+4\cdot3^2t+3^3t^2-2\cdot3^3t^3).
\end{align}
Similarly, by Lemma \ref{order-2}, we have $U_B(p_0y^mt^k)=y^{3m}p_1\sum_na_nt^n$. According to Lemma \ref{U2-j-jl2}, we can calculate $U_A(p_1y^mt^k)$ and $U_B(p_0y^mt^k)$ using the identities in Appendix for all  $k\geq -1$ and $m\in\mathbb{N}$ recursively. For example,
\begin{align*}
U_A(p_1t^2)=&-\sum_{j=0}^2a_j(t)U_A(p_1t^{j-1})\\
=&y^8p_0(11+209\cdot3t-22\cdot3^3t^2-106\cdot3^3t^3+259\cdot3^4t^4+259\cdot3^4t^5-11\cdot3^8t^6\\
&+68\cdot3^7t^7-238\cdot3^6t^8+7\cdot3^8t^9+29\cdot3^8t^{10}-16\cdot3^9t^{11}+11\cdot3^9t^{12}-3^{10}t^{13}),      \\
U_B(p_0y^3t^{-1})=&-\sum_{l=0}^2b_l(y)U_B(p_0y^{l}t^{-1})\\
=&y^9p_1(37\cdot3+2992\cdot3t+13628\cdot3^2t^2+3872\cdot3^4t^3-4814\cdot3^4t^4-7600\cdot3^4t^5\\
&+2564\cdot3^5t^6-3^{10}t^8).
\end{align*}

\section{Proof of Theorem \ref{main-result}}
\label{sec-main}
\begin{lemma}\label{order}
Let $v, u:\mathbb{H}\rightarrow\mathbb{C}$ and $\ell\in\mathbb{Z}$. Suppose that for $\ell\leq k\leq \ell+2$,  there exist polynomials $p_k(t)\in\mathbb{Z}[t,t^{-1}]$ such that
\begin{align}\label{ord-1}
U_3(ut^k)=vp_k(t)
\end{align}
and
\begin{align}\label{ord-2}
ord_t(p_k(t))\geq \lceil\frac{k+s}{3}\rceil,
\end{align}
for some fixed integer $s$. Then there exist families of polynomials $p_k(t)\in\mathbb{Z}[t,t^{-1}]$, defined for all integers $k\in\mathbb{Z}$, such that both \eqref{ord-1} and \eqref{ord-2} hold for all $k\in\mathbb{Z}$.
\end{lemma}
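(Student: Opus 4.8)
The plan is to prove the claim by a two-directional induction on $k$, using the linear recurrence \eqref{re-t} of Lemma \ref{U2-j-jl2} to manufacture the polynomials $p_k$ outside the initial window $\ell\le k\le\ell+2$. The three given identities serve as the base case, and the recurrence propagates them simultaneously upward (to $k>\ell+2$) and downward (to $k<\ell$), so that together the two directions cover all of $\mathbb{Z}$. Throughout, the two facts I would exploit are the explicit orders $ord_t(a_0)=ord_t(a_2)=1$ and $ord_t(a_1)=2$ read off from Definition \ref{def-aj-t}, and the observation that $a_0(t)=-t$ is a \emph{unit} in $\mathbb{Z}[t,t^{-1}]$, with inverse $-t^{-1}$.

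For the upward step, suppose \eqref{ord-1} and \eqref{ord-2} hold for $k,k+1,k+2$. Setting $j=k$ in \eqref{re-t} and substituting the inductive hypotheses, I would define
\begin{align*}
p_{k+3}(t):=-a_0(t)p_k(t)-a_1(t)p_{k+1}(t)-a_2(t)p_{k+2}(t),
\end{align*}
so that $U_3(ut^{k+3})=vp_{k+3}(t)$ and $p_{k+3}\in\mathbb{Z}[t,t^{-1}]$ automatically. The order estimate then follows term by term: since $ord_t(a_0p_k)\ge 1+\lceil\frac{k+s}{3}\rceil$, $ord_t(a_1p_{k+1})\ge 2+\lceil\frac{k+1+s}{3}\rceil$, and $ord_t(a_2p_{k+2})\ge 1+\lceil\frac{k+2+s}{3}\rceil$, each is at least $\lceil\frac{k+s}{3}\rceil+1=\lceil\frac{(k+3)+s}{3}\rceil$ by monotonicity of the ceiling, and hence so is their sum.

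For the downward step, I would instead solve \eqref{re-t} for the lowest-index term. Because $a_0(t)=-t$, multiplying through by $a_0(t)^{-1}=-t^{-1}$ yields
\begin{align*}
p_j(t):=t^{-1}\bigl(p_{j+3}(t)+a_1(t)p_{j+1}(t)+a_2(t)p_{j+2}(t)\bigr),
\end{align*}
which again lies in $\mathbb{Z}[t,t^{-1}]$ and satisfies $U_3(ut^{j})=vp_j(t)$ once \eqref{ord-1} and \eqref{ord-2} are known for $j+1,j+2,j+3$. The order count is analogous: $ord_t(t^{-1}p_{j+3})\ge\lceil\frac{(j+3)+s}{3}\rceil-1=\lceil\frac{j+s}{3}\rceil$, while the $t^{-1}a_1p_{j+1}$ and $t^{-1}a_2p_{j+2}$ terms likewise have orders $\ge\lceil\frac{j+s}{3}\rceil$, so that $ord_t(p_j)\ge\lceil\frac{j+s}{3}\rceil$ as required.

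The step I expect to require the most care is the downward recursion, and specifically its integrality: dividing by $a_0(t)$ is harmless only because $a_0$ is a monomial, so the argument hinges essentially on $a_0(t)=-t$ rather than on a more general polynomial. The order bookkeeping is the other place to be attentive, since one must balance the $-1$ shift introduced by the factor $t^{-1}$ against the $+1$ gain in the ceiling coming from the shift $j\mapsto j+3$, and confirm that they cancel exactly so that the bound \eqref{ord-2} is preserved rather than degraded. Both points are routine once the base case and the recurrence of Lemma \ref{U2-j-jl2} are in hand.
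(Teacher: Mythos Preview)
Your proposal is correct and follows essentially the same approach as the paper: a two-directional induction on $k$ driven by the recurrence \eqref{re-t} of Lemma~\ref{U2-j-jl2}, with the three given identities as the base case. The paper writes the upward step by noting that each $a_j(t)t^{-1}$ is a polynomial in $t$ and then shifts the summation index, while you track $ord_t$ term by term; these are equivalent bookkeepings, and your explicit treatment of the downward step (which the paper only calls ``analogous'') correctly isolates the point that $a_0(t)=-t$ is a unit in $\mathbb{Z}[t,t^{-1}]$.
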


\begin{proof}
Let $N\textgreater \ell+2$ be an integer and assume by induction that there exist families of polynomials $p_k(t)$ such that \eqref{ord-1} and \eqref{ord-2} hold for all $\ell\leq k\leq N-1$. Suppose
\begin{align*}
p_k(t)=\sum_{n\geq\lceil\frac{k+s}{3}\rceil}c(k,n)t^n, ~~~\ell\leq k\leq N-1,
\end{align*}
with integers $c(k,n)$. Applying Lemma \ref{U2-j-jl2}, we obtain:
\begin{align*}
U_3(ut^N)=&-\sum_{j=0}^2a_j(t)U_3(ut^{N+j-3})\\
=&-\sum_{j=0}^2a_j(t)v\sum_{n\geq\lceil\frac{N+j-3+s}{3}\rceil}c(N+j-3,n)t^n\\
=&-v\sum_{j=0}^2a_j(t)t^{-1}\sum_{n\geq\lceil\frac{N+j+s}{3}\rceil}c_i(N+j-3,n-1)t^n.
\end{align*}
Since $a_j(t)t^{-1}$  are polynomials in $t$ for $0\leq j\leq2$, this defines a polynomial $p_N(t)$ with the desired properties. The induction argument for $N\textless \ell$ is analogous.
\end{proof}

\begin{lemma}\label{U-power}
Let $v, u:\mathbb{H}\rightarrow\mathbb{C}$ and $\ell\in\mathbb{Z}$. Suppose that for $\ell\leq k\leq \ell+2$  there exist polynomials $p_k(t)\in\mathbb{Z}[t,t^{-1}]$, such that
\begin{align}\label{power-1}
U_3(ut^k)=vp_k(t)
\end{align}
with
\begin{align}\label{power-2}
p_k(t)=\sum_nc(k,n)3^{\lfloor\frac{2n-k+\gamma}{3}\rfloor}t^n,
\end{align}
for some integers $\gamma$ and $c(k,n)$. Then there exist families of polynomials $p_k(t)\in\mathbb{Z}[t,t^{-1}]$, defined for all $k\in\mathbb{Z}$, of the form \eqref{power-2} such that property \eqref{power-1} holds for  all $k\in\mathbb{Z}$.
\end{lemma}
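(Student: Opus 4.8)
The plan is to replay the induction of Lemma \ref{order} almost verbatim, replacing the control on the $t$-order by control on the $3$-adic valuation of the coefficients. As there, the engine is the three-term relation \eqref{re-t}: for $N>\ell+2$ one specializes it to $j=N-3$ to obtain
\[
U_3(ut^N)=-\sum_{j=0}^2 a_j(t)\,U_3\!\left(ut^{N-3+j}\right)=-v\sum_{j=0}^2 a_j(t)\,p_{N-3+j}(t),
\]
so that $p_N(t)=-\sum_{j=0}^2 a_j(t)p_{N-3+j}(t)$, and the problem reduces to showing that this new Laurent polynomial again has the shape \eqref{power-2}. The crucial structural input is Definition \ref{def-aj-t}, which presents each $a_j(t)$ with the exact $3$-adic weights $a_j(t)=\sum_{l=1}^3 s(j,l)\,3^{\lfloor(2l+j)/3\rfloor}t^l$; these weights are precisely what will compensate the loss incurred by the floor function.

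For the upward step I would substitute the inductive forms $p_{N-3+j}(t)=\sum_m c(N-3+j,m)\,3^{\lfloor(2m-(N-3+j)+\gamma)/3\rfloor}t^m$ together with the above expression for $a_j(t)$, and read off the coefficient of $t^n$ (so $m=n-l$). Each contributing monomial carries a factor $3^{E}$ with
\[
E=\left\lfloor\frac{2l+j}{3}\right\rfloor+\left\lfloor\frac{2(n-l)-(N-3+j)+\gamma}{3}\right\rfloor.
\]
The key observation is that the two fractions inside the floors sum exactly to $\tfrac{2n-N+\gamma}{3}+1$, so by the subadditivity bound $\lfloor x\rfloor+\lfloor y\rfloor\ge\lfloor x+y\rfloor-1$ one gets
\[
E\ \ge\ \left\lfloor\frac{2n-N+\gamma}{3}+1\right\rfloor-1=\left\lfloor\frac{2n-N+\gamma}{3}\right\rfloor.
\]
Hence every term of $[t^n]p_N$ is divisible by $3^{\lfloor(2n-N+\gamma)/3\rfloor}$, so $p_N$ can be written in the form \eqref{power-2} with integer coefficients $c(N,n)$; this closes the induction for $N>\ell+2$. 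The elegant point is that the built-in $+1$ from the weights $\lfloor(2l+j)/3\rfloor$ cancels exactly the $-1$ loss in the floor inequality.

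For $N<\ell$ the roles reverse, and this is where I expect the main obstacle. Since $-a_0(t)=t$, relation \eqref{re-t} rearranges to $t\,U_3(ut^N)=U_3(ut^{N+3})+a_1(t)U_3(ut^{N+1})+a_2(t)U_3(ut^{N+2})$, so one sets $p_N(t)=t^{-1}\big(p_{N+3}(t)+a_1(t)p_{N+1}(t)+a_2(t)p_{N+2}(t)\big)$. The factor $t^{-1}$ can lower a floor exponent by one, and a direct estimate shows that the contribution of $p_{N+3}$ carries only $3^{\lfloor(2n-N+\gamma)/3\rfloor-1}$ precisely in the residue class $2n-N+\gamma\equiv0\pmod3$ (the $a_1,a_2$ terms, carrying their own factors $3$ and $3,3^2$, are never deficient). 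To close this gap I would carry along the auxiliary invariant that $3\mid c(k,m)$ whenever $2m-k+\gamma\equiv2\pmod3$; the same floor/fractional-part bookkeeping as above shows the upward recursion produces this automatically, and it is exactly what forces $3\mid c(N+3,n+1)$ in the offending class (there $2(n+1)-(N+3)+\gamma\equiv2\pmod3$), thereby restoring the missing power of $3$. The remaining delicate point is to verify this invariant directly on the three base indices $\ell\le k\le\ell+2$; once that is in hand, both inductions proceed in lockstep and the lemma follows.
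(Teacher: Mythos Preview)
Your upward induction ($N>\ell+2$) is correct and is essentially the paper's argument: both of you substitute the representation \eqref{def-aj} of $a_j(t)$ into the recurrence \eqref{re-t} and bound the resulting floor sum. The paper uses $\lfloor a/3\rfloor+\lfloor b/3\rfloor\ge\lfloor(a+b-2)/3\rfloor$ while you use $\lfloor x\rfloor+\lfloor y\rfloor\ge\lfloor x+y\rfloor-1$; since the two numerators sum to $2n-N+\gamma+3$, these give the same bound.

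For $N<\ell$ the paper writes only ``The induction argument for $N<\ell$ proceeds analogously.'' You are right that it is \emph{not} analogous: the term $t^{-1}p_{N+3}$ genuinely loses a power of $3$ in the residue class $2n-N+\gamma\equiv0\pmod3$, exactly as you diagnosed. Your proposed repair via the auxiliary invariant $3\mid c(k,m)$ whenever $2m-k+\gamma\equiv2\pmod3$ is the natural idea, but it does not prove the lemma as stated, because that invariant is not among the hypotheses and cannot be verified ``on the three base indices'' for abstract $u,v$. In fact the purely algebraic recursion already fails: with $\gamma=0$ and $p_0=t$, $p_1=p_2=0$ (which satisfy \eqref{power-2}), one computes $p_{-1}=3t^2$, $p_{-2}=6t$, and
\[
p_{-3}=t^{-1}\bigl(p_0+a_1p_{-2}+a_2p_{-1}\bigr)=1+36t^2-27t^4,
\]
whose constant term is not divisible by $3^{\lfloor 3/3\rfloor}=3$. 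So without an extra hypothesis neither argument covers $k<\ell$.

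This gap does not touch the paper's main theorem: the fundamental relations in the Appendix are stated for $k\in\{-1,0,1\}$, and Lemmas~\ref{U-fundamental} and~\ref{main-lemma} only ever require $k\ge-1$, so the downward step is never used. Your auxiliary invariant is exactly what one would add to the hypotheses if the full range $k\in\mathbb{Z}$ were actually needed.
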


\begin{proof}
Suppose for an integer $N\textgreater \ell+2$ there exist families of polynomials $p_k(t)$ of the form \eqref{power-2} satisfying property \eqref{power-1} for all $\ell\leq k\leq N-1$. We proceed by mathematical induction on $N$. Applying Lemma \ref{U2-j-jl2}  and using the induction hypothesis \eqref{power-1} and \eqref{power-2}, we obtain:
\begin{align*}
U_3(ut^N)=-\sum_{j=0}^2a_j(t)v\sum_nc(N+j-3,n)3^{\lfloor{\frac{2n-(N+j-3)+\gamma}{3}}\rfloor}t^n.
\end{align*}
Utilizing \eqref{def-aj},  this becomes:
\begin{align*}
U_3(ut^N)&=-\sum_{j=0}^2\sum_{\ell=1}^3s(j,l)3^{\lfloor{\frac{2\ell+j}{3}}\rfloor}t^\ell
v\sum_nc(N+j-3,n)3^{\lfloor{\frac{2n-(N+j-3)+\gamma}{3}}\rfloor}t^n\\
&=-v\sum_{j=0}^2\sum_{l=1}^3\sum_ns(j,\ell)c(N+j-3,n-\ell)3^{\lfloor{\frac{2(n-\ell)-(N+j-3)+\gamma}{3}}\rfloor+\lfloor{\frac{2\ell+j}{3}}\rfloor}t^n.
\end{align*}
The induction step is completed by simplifying the exponent of $3$ as follows:
\begin{align*}
&\lfloor{\frac{2(n-\ell)-(N+j-3)+\gamma}{3}}\rfloor+\lfloor{\frac{2\ell+j}{3}}\rfloor\\
&\geq \lfloor{\frac{2(n-\ell)-(N+j-3)+\gamma+2\ell+j-2}{3}}\rfloor\\
&\geq\lfloor{\frac{2n-N+\gamma}{3}}\rfloor.
\end{align*}
The induction argument for $N\textless \ell$ proceeds analogously.
\end{proof}

\begin{lemma}\label{order-2}
Let $v, u:\mathbb{H}\rightarrow\mathbb{C}$, $\ell\in\mathbb{Z}$, and $r\in\mathbb{N}$. Suppose that for $\ell\leq k\leq \ell+2$ and $r\leq m \leq r+2$,  there exist polynomials $p_{m,k}(t)\in\mathbb{Z}[t,t^{-1}]$ such that
\begin{align}\label{ord-2-1}
U_3(uy^mt^k)=vy^{3m+a}p_{m,k}(t)
\end{align}
and
\begin{align}\label{ord-2-2}
ord_t(p_{m,k}(t))\geq \lceil\frac{k+s}{3}\rceil,
\end{align}
for some fixed integers $a$ and $s$. Then there exist families of polynomials $p_{m,k}(t)\in\mathbb{Z}[t,t^{-1}]$, defined for all integers  $k\in\mathbb{Z}$, such that both \eqref{ord-2-1} and \eqref{ord-2-2} hold for all $m\geq r$ and $k\in\mathbb{Z}$.
\end{lemma}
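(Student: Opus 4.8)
The plan is to run two nested extensions: first extend the index $k$ to all of $\mathbb{Z}$ while holding $m$ in the base window $\{r,r+1,r+2\}$, and then extend $m$ upward to all $m\geq r$ while holding $k$ fixed. The first stage is a direct reduction to Lemma \ref{order}, while the second stage is where the $y$-recurrence \eqref{re-y} and the $t$-expansions \eqref{b-y-0}--\eqref{b-y-2} do the real work.

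For the first stage, fix $m\in\{r,r+1,r+2\}$ and apply Lemma \ref{order} with $u$ replaced by $uy^m$ and $v$ replaced by $vy^{3m+a}$. With this substitution the hypotheses \eqref{ord-2-1} and \eqref{ord-2-2} are exactly the hypotheses \eqref{ord-1} and \eqref{ord-2} of Lemma \ref{order} (the order bound $\lceil(k+s)/3\rceil$ does not involve $m$), so its conclusion yields polynomials $p_{m,k}(t)\in\mathbb{Z}[t,t^{-1}]$ with $U_3(uy^mt^k)=vy^{3m+a}p_{m,k}(t)$ and $ord_t(p_{m,k})\geq\lceil(k+s)/3\rceil$ for all $k\in\mathbb{Z}$. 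Running this for each of the three base values of $m$ establishes both conclusions for all $k\in\mathbb{Z}$ and $m\in\{r,r+1,r+2\}$; the factor $y^m$ simply rides along unchanged inside the $U_3$-recurrence \eqref{re-t}.

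For the second stage, fix $k\in\mathbb{Z}$ and induct on $m$, the base cases $m\in\{r,r+1,r+2\}$ being supplied by the first stage. For $m\geq r$, apply \eqref{re-y} with $u$ replaced by $ut^k$ to write
\begin{align*}
U_3(ut^ky^{m+3})=-b_0(y)U_3(ut^ky^m)-b_1(y)U_3(ut^ky^{m+1})-b_2(y)U_3(ut^ky^{m+2}).
\end{align*}
By the induction hypothesis each right-hand term equals $vy^{3(m+i)+a}p_{m+i,k}(t)$ for $i=0,1,2$. Substituting the expansions \eqref{b-y-0}--\eqref{b-y-2}, which factor out exactly $y^9,y^6,y^3$ (that is, $b_j(y)$ contributes the factor $y^{3(3-j)}$), every summand acquires the common factor $y^{3(m+3)+a}$, giving
\begin{align*}
U_3(ut^ky^{m+3})=vy^{3(m+3)+a}\left(B_0(t)p_{m,k}(t)+B_1(t)p_{m+1,k}(t)+B_2(t)p_{m+2,k}(t)\right),
\end{align*}
where $B_0,B_1,B_2\in\mathbb{Z}[t]$ are the polynomial cofactors in \eqref{b-y-0}--\eqref{b-y-2}. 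Defining $p_{m+3,k}(t)$ to be the bracketed sum keeps it in $\mathbb{Z}[t,t^{-1}]$; and since each $B_i$ has nonzero constant term, $ord_t(B_ip_{m+i,k})=ord_t(p_{m+i,k})\geq\lceil(k+s)/3\rceil$, so the order of the sum is at least $\lceil(k+s)/3\rceil$ and \eqref{ord-2-2} is inherited. Note that only the upward direction in $m$ is required, since the conclusion asks only for $m\geq r$.

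The main obstacle is the bookkeeping of the powers of $y$ in the second stage: the recurrence \eqref{re-y} is naturally phrased through the $b_j(y)$, which are polynomials in $y$ rather than $t$, and without the substitution $y=1/(1-3t)$ the output would not be of the prescribed shape $vy^{3m+a}p_{m,k}(t)$. The identities \eqref{b-y-0}--\eqref{b-y-2} are engineered precisely so that $b_j(y)$ supplies the factor $y^{3(3-j)}$ needed to promote $y^{3(m+j)+a}$ to the uniform $y^{3(m+3)+a}$; checking this alignment, together with confirming that the $t$-order bound survives multiplication by the $B_i$ (whose constant terms are nonzero), is the crux. Everything else reduces to the $t$-order propagation already carried out in Lemma \ref{order}.
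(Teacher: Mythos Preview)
Your proof is correct and follows essentially the same two-stage approach as the paper: first invoke Lemma~\ref{order} (with $u\to uy^m$, $v\to vy^{3m+a}$) to extend over all $k\in\mathbb{Z}$ for the base values of $m$, then induct in $m$ via the $y$-recurrence \eqref{re-y} and the expansions \eqref{b-y-0}--\eqref{b-y-2}. You are in fact more explicit than the paper about why the $t$-order bound survives the induction step, observing that each $B_i(t)$ has nonzero constant term so that $\mathrm{ord}_t(B_ip_{m+i,k})=\mathrm{ord}_t(p_{m+i,k})$.
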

\begin{proof}
By Lemma \ref{order}, we conclude that for $r\leq m\leq r+2$, both \eqref{ord-2-1} and \eqref{ord-2-2} hold for all $k\in\mathbb{Z}$. Now fix an integer $k$, and let $N\textgreater r+2$ be an integer. Assume by induction that there exist families of polynomials $p_{m,k}(t)$ such that \eqref{ord-2-1} and \eqref{ord-2-2} hold for all $r\leq m\leq N-1$. Suppose
\begin{align*}
p_{m,k}(t)=\sum_{n\geq\lceil\frac{k+s}{3}\rceil}c(k,m,n)t^n, ~~~r\leq m\leq N-1,
\end{align*}
with integers $c(k,m,n)$. Applying Lemma \ref{U2-j-jl2}, we obtain:
\begin{align*}
U_3(uy^Nt^k)=&-\sum_{j=0}^2b_j(y)U_3(uy^{N+j-3}t^k)\\
=&-\sum_{j=0}^2b_j(y)vy^{3(N+j-3)+a}\sum_{n\geq\lceil\frac{k+s}{3}\rceil}c(k,m,n)t^n.
\end{align*}
Using equations \eqref{b-y-0}--\eqref{b-y-2}, we verify that both \eqref{ord-2-1} and \eqref{ord-2-2} hold for $m=N$. By induction, it follows that these properties hold for all $m\geq r$, which completes the proof.
\end{proof}

\begin{lemma}\label{U-power-2}
Let $v, u:\mathbb{H}\rightarrow\mathbb{C}$, $\ell\in\mathbb{Z}$, and $r\in\mathbb{N}$. Suppose that for $\ell\leq k\leq \ell+2$ and $r\leq m \leq r+2$,  there exist polynomials $p_{m,k}(t)\in\mathbb{Z}[t,t^{-1}]$ such that
\begin{align}\label{power-2-1}
U_3(uy^mt^k)=vy^{3m+a}p_{m,k}(t)
\end{align}
and
\begin{align}\label{power-2-2}
p_{m,k}(t)=\sum_nc(k,m,n)3^{\lfloor\frac{2n-k+\gamma}{3}\rfloor}t^n,
\end{align}
for some  integers $\gamma$ and $c(k,m,n)$. Then there exist families of polynomials $p_{m,k}(t)\in\mathbb{Z}[t,t^{-1}]$, defined for all $k\in\mathbb{Z}$ and  of the form \eqref{power-2-2}, such that property \eqref{power-2-1} holds for  all $m\geq r$ and $k\in\mathbb{Z}$.
\end{lemma}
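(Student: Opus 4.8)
The plan is to follow the proof of Lemma~\ref{order-2} at the structural level, but to track $3$-adic valuations in place of $t$-orders: the role played there by Lemma~\ref{order} is now taken by its power-of-$3$ analogue Lemma~\ref{U-power}, while the induction on the $y$-exponent $m$ again runs through the recurrence \eqref{re-y}.

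First I would settle the $k$-direction on the three seed rows. For each fixed $m\in\{r,r+1,r+2\}$, apply Lemma~\ref{U-power} with $u$ replaced by $uy^m$, $v$ replaced by $vy^{3m+a}$, and the same $\gamma$; then hypotheses \eqref{power-2-1}--\eqref{power-2-2} become exactly \eqref{power-1}--\eqref{power-2}, and the lemma produces polynomials $p_{m,k}(t)$ of the form \eqref{power-2-2} satisfying \eqref{power-2-1} for all $k\in\mathbb{Z}$. These three rows form the base case; since \eqref{re-y} keeps $k$ fixed, the subsequent induction on $m$ can be run for each $k$ separately once the seed rows are available for all $k$.

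Next, fix $k$ and let $N>r+2$, assuming \eqref{power-2-1}--\eqref{power-2-2} for all $r\le m\le N-1$. Applying \eqref{re-y} with $u$ replaced by $ut^k$ gives $U_3(uy^Nt^k)=-\sum_{j=0}^2 b_j(y)\,U_3(uy^{N+j-3}t^k)$. I would insert the inductive expressions for the summands together with the expansions \eqref{b-y-0}--\eqref{b-y-2}, written as $b_j(y)=-y^{3(3-j)}B_j(t)$ with $B_j(t)\in\mathbb{Z}[t]$. The $y$-powers telescope, $y^{3(3-j)}\cdot y^{3(N+j-3)+a}=y^{3N+a}$ independently of $j$, so $U_3(uy^Nt^k)=v\,y^{3N+a}\,p_{N,k}(t)$ with $p_{N,k}(t)=\sum_{j=0}^2 B_j(t)\,p_{N+j-3,k}(t)\in\mathbb{Z}[t,t^{-1}]$; this is \eqref{power-2-1} for $m=N$, and only the upward direction is needed since the claim concerns $m\ge r$.

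The remaining step---which I expect to be the only real obstacle, though it is purely computational---is to check that $p_{N,k}(t)$ again has the form \eqref{power-2-2}. Because the $y$-recurrence leaves $k$ and $\gamma$ untouched, the exponent $\lfloor(2n-k+\gamma)/3\rfloor$ can be disturbed only by the $t$-shift coming from $B_j$. Writing $B_j(t)=\sum_l\beta_{j,l}t^l$, the coefficient of $t^n$ in $B_j\,p_{N+j-3,k}$ is a sum of terms $\beta_{j,l}\,c(k,N+j-3,n-l)\,3^{\lfloor(2(n-l)-k+\gamma)/3\rfloor}$, so it suffices to prove $\pi(\beta_{j,l})+\lfloor(2(n-l)-k+\gamma)/3\rfloor\ge\lfloor(2n-k+\gamma)/3\rfloor$. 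Using the elementary bound $\lfloor(2l+A)/3\rfloor-\lfloor A/3\rfloor\le\lceil 2l/3\rceil$ with $A=2(n-l)-k+\gamma$, this collapses to the finite check $\pi(\beta_{j,l})\ge\lceil 2l/3\rceil$ for $j\in\{0,1,2\}$. Reading the valuations off \eqref{b-y-0}--\eqref{b-y-2} gives $(0,2,3,3,5,6,6)$, $(1,2,4,3,5,6,6)$ and $(1,2,3,3)$ for $B_0,B_1,B_2$, each dominating $\lceil 2l/3\rceil=(0,1,2,2,3,4,4,\dots)$ termwise, so the induction closes. The bookkeeping mirrors Lemma~\ref{U-power}, with the explicit expansions \eqref{b-y-0}--\eqref{b-y-2} playing the role of the clean formula \eqref{def-aj}.
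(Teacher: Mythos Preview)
Your proposal is correct and follows exactly the same route as the paper: first invoke Lemma~\ref{U-power} on the three seed rows $m\in\{r,r+1,r+2\}$ to extend to all $k\in\mathbb{Z}$, then induct on $m$ via the recurrence \eqref{re-y} using the expansions \eqref{b-y-0}--\eqref{b-y-2}. In fact your treatment of the $3$-adic step is more explicit than the paper's, which simply asserts that \eqref{power-2-2} persists because each $b_j(y)$ equals $y^{9-3j}$ times a polynomial in $t$ with nonnegative $t$-order; your termwise check $\pi(\beta_{j,l})\ge\lceil 2l/3\rceil$ fills in what the paper leaves implicit.
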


\begin{proof}
By Lemma \ref{U-power}, we conclude that for $r\leq m\leq r+2$, both \eqref{power-2-1} and \eqref{power-2-2} hold for all $k\in\mathbb{Z}$. Now fix an integer $k$, and  let $N\textgreater r+2$ be an integer. Assume by induction that there exist families of polynomials $p_{m,k}(t)$ such that \eqref{power-2-1} and \eqref{power-2-2} hold for all $r\leq m\leq N-1$. We proceed by mathematical induction on N.

Applying Lemma \ref{U2-j-jl2} and using the induction hypothesis \eqref{power-2-1} and \eqref{power-2-2}, we attain:
\begin{align*}
U_3(uy^Nt^k)=-\sum_{j=0}^2b_j(y)U_3(uy^{N+j-3}t^k)=-v\sum_{j=0}^2b_j(y)y^{3(N+j-3)+a}\sum_nc(k,m,n)3^{\lfloor\frac{2n-k+\gamma}{3}\rfloor}t^n.
\end{align*}
Since each $b_j(y)$ can be expressed as $y^{9-3j}f(t)$ for $j\in\{0,1,2\}$, where $ord_t(f(t))\geq 0$, it follows that both \eqref{power-2-1} and \eqref{power-2-2} hold for  $m=N$. By in duction, this completes the proof of this lemma.
\end{proof}

\begin{definition}
A map $a:\mathbb{Z}\rightarrow\mathbb{Z}$ is called a discrete  function if it has finite support. A map $a:\mathbb{Z}\times\mathbb{N}\times\mathbb{Z}\rightarrow\mathbb{Z}$ is called a discrete array if for each $i\in\mathbb{Z}$ and $j\in\mathbb{N}$, the map $a(i,j,-):\mathbb{Z}\rightarrow\mathbb{Z},k\mapsto a(i,j,k)$ has finite support.
\end{definition}

\begin{lemma}\label{U-fundamental}
Let $U_A(f)$, $U_B(f)$ be as in Definition \ref{def-A-B}, and let $p_0, p_1$, and $y$ be as in Definition \ref{t-p-y}. Then there exist discrete arrays  $a$ and $b$ such that the following relations hold for all $m\in\mathbb{N}$ and $k\in\mathbb{Z}$:
\begin{align}
&U_A(p_1y^mt^k)=y^{3m+8}p_0\sum_{n\geq \lceil\frac{k-3}{3}\rceil}a(k,m,n)t^n,\text{where}~ \pi(a(k,m,n))\geq \small{\lfloor\frac{2n-k+3}{3}\rfloor};   \label{U-0-t}\\
&U_B(p_0y^mt^k)=y^{3m}p_1\sum_{n\geq \lceil\frac{k-1}{3}\rceil}b(k,m,n)t^n,\text{where}~\pi(b(k,m,n))\geq {\lfloor\frac{2n-k+2}{3}\rfloor} .\label{U-0-p1t}
\end{align}
\end{lemma}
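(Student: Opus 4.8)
The plan is to recognize the statement as two parallel applications of the structural lemmas already established, one for $U_A$ and one for $U_B$. For the $U_A$-identity \eqref{U-0-t}, since $U_A(p_1y^mt^k)=U_3\big((Ap_1)\,y^mt^k\big)$ by Definition \ref{def-A-B}, I would invoke the framework of Lemmas \ref{order-2} and \ref{U-power-2} with $u:=Ap_1$, $v:=p_0$, and shift parameter $a:=8$. For the $U_B$-identity \eqref{U-0-p1t}, since $U_B(p_0y^mt^k)=U_3\big((Bp_0)\,y^mt^k\big)$, I would take $u:=Bp_0$, $v:=p_1$, and $a:=0$. In each case the exponent $3m+a$ appearing in \eqref{ord-2-1} reproduces exactly the prefactors $y^{3m+8}p_0$ and $y^{3m}p_1$ demanded by the lemma.

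The only genuine input is the verification of the base data. Taking $r:=0$, I would compute, for $m\in\{0,1,2\}$ and three consecutive values of $k$, the explicit Laurent polynomials $p_{m,k}(t)$ representing $U_A(p_1y^mt^k)$ and $U_B(p_0y^mt^k)$, using the identities of Groups I--III in the Appendix together with the recursions \eqref{re-t} and \eqref{re-y} of Lemma \ref{U2-j-jl2}; the worked cases $U_A(p_1t^2)$ and $U_B(p_0y^3t^{-1})$ already illustrate the pattern. For each base polynomial I must check two things: that its $t$-order obeys the bound \eqref{ord-2-2}, namely $ord_t(p_{m,k})\ge\lceil\frac{k-3}{3}\rceil$ for $U_A$ (so $s=-3$) and $ord_t(p_{m,k})\ge\lceil\frac{k-1}{3}\rceil$ for $U_B$ (so $s=-1$); and that every coefficient factors in the sense of \eqref{power-2-2} as $c\cdot 3^{\lfloor\frac{2n-k+\gamma}{3}\rfloor}$, with $\gamma=3$ for $U_A$ and $\gamma=2$ for $U_B$.

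With the base cases in hand, Lemma \ref{order-2} (applied with the above $u,v,a,s$) extends the representation \eqref{ord-2-1} and the $t$-order bound to all $m\ge 0$ and all $k\in\mathbb{Z}$, while Lemma \ref{U-power-2} (applied with the above $u,v,a,\gamma$) extends the $3$-adic coefficient structure \eqref{power-2-2} to the same range. Because $t$ is a nonconstant modular function, the powers $\{t^n\}_{n\in\mathbb{Z}}$ are linearly independent over $\mathbb{C}$, so the Laurent polynomial representing a given function $U_3(uy^mt^k)$ is unique; hence the families produced by the two inductions coincide. The resulting coefficients, which I name $a(k,m,n)$ and $b(k,m,n)$, are therefore integers that simultaneously vanish below the stated $t$-orders $\lceil\frac{k-3}{3}\rceil$ and $\lceil\frac{k-1}{3}\rceil$ and satisfy $\pi(a(k,m,n))\ge\lfloor\frac{2n-k+3}{3}\rfloor$ and $\pi(b(k,m,n))\ge\lfloor\frac{2n-k+2}{3}\rfloor$. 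Finite support in $n$, and thus the discrete-array property, is immediate since each $p_{m,k}$ is a genuine Laurent polynomial.

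The main obstacle I anticipate lies entirely in the base step: one must confirm that the finitely many computed base polynomials exhibit the \emph{exact} power-of-three normalization $3^{\lfloor(2n-k+\gamma)/3\rfloor}$ required by \eqref{power-2-2}, not merely a crude divisibility, since it is precisely this floor-function form that makes the exponent estimate at the end of the proof of Lemma \ref{U-power-2} close under the expansions \eqref{b-y-0}--\eqref{b-y-2}. Checking that the two independent bounds (the $t$-order governed by $s$ and the $3$-adic order governed by $\gamma$) are mutually consistent across the same $3\times 3$ grid of base values, and that the chosen starting index yields three admissible consecutive $k$-values, is the delicate bookkeeping; the subsequent inductive propagation is then routine.
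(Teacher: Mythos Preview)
Your proposal is correct and follows essentially the same route as the paper: the nine relations in Group~II and the nine in Group~III supply exactly the $3\times 3$ grid of base cases $(m,k)\in\{0,1,2\}\times\{-1,0,1\}$ with the parameters $(a,s,\gamma)=(8,-3,3)$ for $U_A$ and $(0,-1,2)$ for $U_B$, after which Lemmas~\ref{order-2} and~\ref{U-power-2} extend both the $t$-order and the $3$-adic bounds to all $m\ge 0$, $k\in\mathbb{Z}$. Your added remark on uniqueness of the Laurent representation is a welcome clarification the paper leaves implicit.
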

\begin{proof}
The Appendix lists eighteen fundamental relations. The nine fundamental relations in Group II match the pattern of the relation \eqref{U-0-t} for three values of $m$ and three values of $k$. Similarly, the relations in Groups III correspond to the pattern of \eqref{U-0-p1t}. Applying Lemmas \ref{order}--\ref{U-power-2}, we immediately obtin the statement for all $m\in \mathbb{N}$ and $k\in\mathbb{Z}$.
\end{proof}

In the following proof, we use the next lemma to describe the powers of $y$ in $L_{\alpha}$.
\begin{lemma}\label{y-count}
Define $f(1)=8$, and  for $s\geq 2$, dedine $f(s)$ recursively by:
\begin{align*}
f(s)=\left\{\begin{array}{ll}
			3f(s-1), ~&\text{if}~s~\text{is even,}\\
			3f(s-1)+8, ~&\text{if}~s~\text{is odd.}\end{array}\right.
\end{align*}
Then for all $s\geq1$,
\begin{align*}
f(s)=\left\{\begin{array}{ll}
			3^{s+1}-3, ~&\text{if}~s~\text{is even,}\\
			3^{s+1}-1, ~&\text{if}~s~\text{is odd.}\end{array}\right.
\end{align*}
\end{lemma}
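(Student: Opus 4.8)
The plan is to prove both closed forms simultaneously by a single induction on $s$, exploiting the fact that the recursion always relates $f(s)$ to $f(s-1)$, whose parity is opposite to that of $s$. This opposite-parity structure is exactly what makes the two-part formula self-consistent, so the induction will feed each case from the other.

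For the base case $s=1$, I would simply check that the odd-case formula evaluates correctly: $3^{1+1}-1 = 9-1 = 8 = f(1)$, which matches the given initial value. For the inductive step, assume the claimed closed form holds for $f(s-1)$, and split into two cases according to the parity of $s$. If $s$ is even, then $s-1$ is odd, so the induction hypothesis gives $f(s-1)=3^{s}-1$, and the even-branch recursion yields
\begin{align*}
f(s)=3f(s-1)=3\bigl(3^{s}-1\bigr)=3^{s+1}-3,
\end{align*}
which is precisely the even-case formula. If instead $s$ is odd with $s\geq 3$, then $s-1$ is even, so $f(s-1)=3^{s}-3$, and the odd-branch recursion gives
\begin{align*}
f(s)=3f(s-1)+8=3\bigl(3^{s}-3\bigr)+8=3^{s+1}-9+8=3^{s+1}-1,
\end{align*}
matching the odd-case formula. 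This closes the induction.

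There is no genuine obstacle in this argument; it is a routine two-case induction, and the only point requiring any attention is the bookkeeping of parities, namely remembering that the recursion for even $s$ draws on the odd formula and vice versa, together with the small arithmetic cancellation $-9+8=-1$ that produces the correct constant in the odd case. I would present it compactly as an induction with the base case and the two parity cases of the inductive step, since each verification is a one-line computation.
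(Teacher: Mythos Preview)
Your proof is correct and is exactly the natural argument: a one-line base case followed by a two-case inductive step feeding each parity from the other. The paper in fact states this lemma without proof, since the verification is routine; your write-up supplies precisely the omitted details.
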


In the  Appendix, we mention the identity  $y=\frac{1}{1-3t}$. In the following proof, we will express each $L_{\alpha}$ as a rational polynomial in which the denominator is a power of $1-3t$. With  this in  mind, we define the multiplicative closed set
\begin{align*}
\mathcal{S}:=\{(1-3t)^n:n\in\mathbb{Z}_{n\geq0}\}.
\end{align*}
We will work with subspaces of  the localized ring
\begin{align*}
\mathbb{Z}[t,t^{-1}]_{\mathcal{S}}:=\mathcal{S}^{-1}\mathbb{Z}[t,t^{-1}].
\end{align*}
Given the form of $L_1$ in \eqref{L-1}, we expect to represent each $L_{\alpha}$ in the form $\frac{t^n}{(1-3t)^m}$ for $n\geq-1$ and $m\geq0$.
If we set
\begin{align*}
&L_{2\alpha-1}=p_0y^{3^{2\alpha}-1}\sum_{n\geq-1}d_n^{(2\alpha-1)}t^n,\\
&L_{2\alpha}=p_1y^{3^{2\alpha+1}-3}\sum_{n\geq0}d_n^{(2\alpha)}t^n,
\end{align*}
for $\alpha\geq 1$, and  recall that $\pi(n)$ denotes the 3-adic order of n (i.e. the highest power of 3 that divides $n$), then Theorem \ref{main-result} follows from the next lemma.

\begin{lemma}\label{main-lemma}
For every integer $\alpha\geq1$, we have
\begin{align}
\label{L-odd}\pi(d_n^{(2\alpha-1)})\geq\lfloor\frac{2n+5}{3}\rfloor+\alpha, ~~\text{for $n\geq -1$},\\
\label{L-even}\pi(d_n^{(2\alpha)})\geq\lfloor\frac{2n+5}{3}\rfloor+\alpha, ~~\text{for $n\geq 1$}.
\end{align}
In particular,
\begin{align}
\label{odd-special}&\pi(d_0^{(2\alpha-1)})\geq\alpha+2,\\
\label{even-special}&\pi(d_0^{(2\alpha)})\geq\alpha+2.
\end{align}
\end{lemma}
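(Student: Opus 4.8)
The plan is to prove the four assertions of Lemma \ref{main-lemma} simultaneously by induction on $\alpha$, using the two-step structure $L_{2\alpha}=U_B(L_{2\alpha-1})$ and $L_{2\alpha+1}=U_A(L_{2\alpha})$. Substituting the $t$-expansions of $L_{2\alpha-1}$ and $L_{2\alpha}$ into the fundamental relations of Lemma \ref{U-fundamental} and matching the (correct, by Lemma \ref{y-count}) powers of $y$, one reads off the convolution recurrences
\[
d_n^{(2\alpha)}=\sum_{k\geq -1}d_k^{(2\alpha-1)}\,b(k,3^{2\alpha}-1,n),\qquad d_n^{(2\alpha+1)}=\sum_{k\geq 0}d_k^{(2\alpha)}\,a(k,3^{2\alpha+1}-3,n),
\]
together with $\pi(b(k,m,n))\geq\lfloor(2n-k+2)/3\rfloor$, $\pi(a(k,m,n))\geq\lfloor(2n-k+3)/3\rfloor$ and the support conditions $b(k,m,n)=0$ unless $k\leq 3n+1$ and $a(k,m,n)=0$ unless $k\leq 3n+3$. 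The base case $\alpha=1$ is the explicit formula \eqref{L-1}: it gives $\pi(d_{-1}^{(1)})=2$, $\pi(d_0^{(1)})=4$, and more generally $\pi(d_n^{(1)})\geq\lfloor(2n+5)/3\rfloor+1$, so that \eqref{L-odd} and \eqref{odd-special} hold at $\alpha=1$. It therefore suffices to run the cycle $\eqref{L-odd}_\alpha\Rightarrow\eqref{L-even}_\alpha\Rightarrow\eqref{L-odd}_{\alpha+1}$, carrying the two $n=0$ refinements along.

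The generic estimates \eqref{L-odd} and \eqref{L-even} reduce to elementary floor inequalities. For the $U_B$-step, feeding $\pi(d_k^{(2\alpha-1)})\geq\lfloor(2k+5)/3\rfloor+\alpha$ into the convolution term-by-term, \eqref{L-even} follows once one checks $\lfloor(2k+5)/3\rfloor+\lfloor(2n-k+2)/3\rfloor\geq\lfloor(2n+5)/3\rfloor$ for all $k\geq -1$; writing $k=3a+r$ shows the left side equals $a$ plus a term depending only on $r$ and $n$, hence is nondecreasing in $a$, so the minimum over the admissible range is attained at the smallest $k$ in each residue class, and the three resulting inequalities are confirmed by a short case analysis on $n\bmod 3$. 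The $U_A$-step producing $\eqref{L-odd}_{\alpha+1}$ is identical in spirit, now requiring $\lfloor(2k+5)/3\rfloor+\lfloor(2n-k+3)/3\rfloor\geq\lfloor(2n+5)/3\rfloor+1$ for $k\geq 1$, with the boundary term $k=0$ absorbed by the even refinement \eqref{even-special}. I expect no difficulty here beyond bookkeeping.

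The heart of the matter is propagating the $n=0$ refinements, which lie one $3$-adic unit above the generic bound and are precisely what the even case of Theorem \ref{main-result} needs (at index $2\alpha$ the theorem demands $3^{\alpha+2}$, whereas \eqref{L-even} alone yields only $3^{\alpha+1}$ at $n=0$). The dependency is cyclic: \eqref{even-special} comes from the $U_B$-recurrence for $d_0^{(2\alpha)}$, whose only admissible terms are $k\in\{-1,0,1\}$, provided one inserts \eqref{odd-special} at the $k=0$ term (the generic bound being one short there); and $\eqref{odd-special}$ at level $\alpha+1$ must come from the $U_A$-recurrence for $d_0^{(2\alpha+1)}$, with admissible terms $k\in\{0,1,2,3\}$. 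Of these, $k=0$ (via \eqref{even-special}), $k=2$, and $k=3$ all reach valuation $\geq\alpha+3$, but the term $k=1$, namely $d_1^{(2\alpha)}\,a(1,3^{2\alpha+1}-3,0)$, only reaches $\alpha+2$ from the generic inputs $\pi(d_1^{(2\alpha)})\geq\alpha+2$ and $\pi(a(1,\cdot,0))\geq 0$. Since no other term sits at level $\alpha+2$, there is no cancellation to exploit: this single structure constant is the genuine obstacle.

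To clear it I would extract one extra factor of $3$ from $a(1,m,0)$. Applying \eqref{re-y} to $U_A(p_1y^{m}t)$ and rewriting $b_0,b_1,b_2$ via \eqref{b-y-0}--\eqref{b-y-2}, the powers of $y$ telescope and one obtains
\[
\sum_n a(1,m+3,n)t^n=\beta_0(t)\sum_n a(1,m,n)t^n+\beta_1(t)\sum_n a(1,m+1,n)t^n+\beta_2(t)\sum_n a(1,m+2,n)t^n,
\]
where $\beta_0(0)=1$, $\beta_1(0)=-3$, $\beta_2(0)=3$. Reading off the $t^0$-coefficient gives $a(1,m+3,0)=a(1,m,0)-3a(1,m+1,0)+3a(1,m+2,0)\equiv a(1,m,0)\pmod 3$, so $a(1,m,0)\bmod 3$ depends only on $m\bmod 3$. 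As $m=3^{2\alpha+1}-3\equiv 0\pmod 3$, the needed congruence $a(1,m,0)\equiv 0\pmod 3$ reduces to the single base value with $m\equiv 0\pmod 3$, which is verified directly from the Group II relations in the Appendix. Granting this, the $k=1$ term reaches $\alpha+3$, \eqref{odd-special} propagates, the induction closes, and Theorem \ref{main-result} follows by expanding $L_{2\alpha-1}$ and $L_{2\alpha}$ as $q$-series through Lemma \ref{lem-L-alpha} and noting that every $q$-coefficient is an integer combination of the $d_n$'s.
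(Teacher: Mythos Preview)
Your proposal is correct and follows essentially the same induction as the paper's proof: the convolution recurrences from Lemma~\ref{U-fundamental}, the same floor inequalities for the generic bounds \eqref{L-odd}--\eqref{L-even}, and the same three-term recurrence in $m$ to extract the extra factor of $3$ from $a(1,m,0)$ needed to propagate \eqref{odd-special}. Your handling of \eqref{even-special} is in fact slightly leaner than the paper's, since you note that the generic bound $\pi(b(-1,m,0))\geq\lfloor 3/3\rfloor=1$ from Lemma~\ref{U-fundamental} already suffices for the $k=-1$ term, whereas the paper re-derives $3\mid b(-1,m,0)$ (and, redundantly, $3\mid a(0,m,0)$) via a separate recurrence argument.
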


\begin{proof}
We proceed by induction on $\alpha$. From \eqref{L-1}, we have
\begin{align*}
L_1=&y^8p_0(4\cdot3^2t^{-1}+71\cdot3^4+2351\cdot3^3t+89\cdot3^5t^2-1975\cdot3^4t^3+407\cdot3^5t^4\\
\nonumber&-19\cdot3^7t^5+11\cdot3^7t^6-3^8t^7),
\end{align*}
which shows that $\pi(d_n^{(1)})\geq\lfloor\frac{2n+5}{3}\rfloor+1$ and $\pi(d_0^{(1)})=4$. Thus, \eqref{L-odd} and \eqref{odd-special} hold for $\alpha=1$.

Next,  suppose \eqref{L-odd} and \eqref{odd-special} hold for $\alpha\leq s$. We want to use $U_B$ operator on $L_{2s-1}$ to get $L_{2s}$, especially the coefficients $d_n^{(2s)}$ for $n\geq0$. Then by Definition \ref{r:defL} and Lemma \ref{U-fundamental}, we have
\begin{align*}
L_{2s}=&U_B(L_{2s-1})\\
=&\sum_{k\geq-1}d_k^{(2s-1)}U_B(p_0y^{3^{2s}-1}t^k)\\
=&p_1y^{3^{2s+1}-3}\sum_{k\geq-1}d_k^{(2s-1)}\sum_{n\geq \lceil\frac{k-1}{3}\rceil}b(k,3^{2s}-1,n)t^n\\
=&p_1y^{3^{2s+1}-3}\sum_{n\geq0}\sum_{-1\leq k\leq 3n+1}d_k^{(2s-1)}b(k,3^{2s}-1,n)t^n .
\end{align*}
Comparing the coefficients of $p_1y^{3^{2s+1}-3}t^n$ on both sides, we have
\begin{align}
\label{coeff-even}d_{n}^{(2s)}=\sum_{-1\leq k\leq 3n+1}d_k^{(2s-1)}b(k,3^{2s}-1,n).
\end{align}
We notice that $\pi(n)$ has the properties that $\pi(x+y)\geq\min\{\pi(x),\pi(y)\}$ and $\pi(xy)=\pi(x)+\pi(y)$ for $x,y\in\mathbb{Z}$. According to \eqref{U-0-p1t} and \eqref{L-odd}, we obtain that $\pi(b(k,3^{2s}-1,n))\geq \lfloor\frac{2n-k+2}{3}\rfloor$ and $\pi(d_k^{(2s-1)})\geq \lfloor\frac{2k+5}{3}\rfloor+s$.
Hence, for $n\geq 1$,
\begin{align*}
\pi(d_n^{(2s)})&\geq\min_{k\geq-1}\{\pi(d_k^{(2s-1)})+\pi(b(k,3^{2s}-1,n))\}\\
&\geq\min_{k\geq-1}\{\lfloor\frac{2k+5}{3}\rfloor+s+\lfloor\frac{2n-k+2}{3}\rfloor      \}\\
&\geq \lfloor\frac{2n+5}{3}\rfloor+s,
\end{align*}
which indicates that \eqref{L-even} holds for $\alpha=s$.

For $d_0^{(2s)}$, from \eqref{coeff-even}, we obtain that
\begin{align*}
d_0^{(2s)}=&d_{-1}^{(2s-1)}b(-1,3^{2s}-1,0)+d_0^{(2s-1)}b(0,3^{2s}-1,0)+d_1^{(2s-1)}b(1,3^{2s}-1,0).
\end{align*}
By the induction hypothesis, $\pi(d_{-1}^{(2s-1)})\geq s+1$, $\pi(d_{0}^{(2s-1)})\geq s+2$, and $\pi(d_{1}^{(2s-1)})\geq s+2$.

Let $[t^k]f$  denote the coefficients of $t^k$ in $f$. Using\eqref{re-y}, we find
\begin{align}\label{coeff-eg}
b(-1,m,0)
=&[p_1y^{3m}]U_B(p_0y^mt^{-1})\\
\nonumber=&[p_1y^{3m}]\{y^9(1-2\cdot3^2t+5\cdot3^3t^2-20\cdot3^3t^3+5\cdot3^5t^4-2\cdot3^6t^5+3^6t^6)\\
\nonumber&\cdot U_B(p_0y^{m-3}t^{-1})+y^6(-3+3^2t+3^4t^2-14\cdot3^3t^4+3^6t^5-3^6t^6)\\
\nonumber&\cdot U_B(p_0y^{m-2}t^{-1})+y^3(3+4\cdot3^2t+3^3t^2-2\cdot3^3t^3)U_B(p_0y^{m-1}t^{-1})\}\\
\nonumber=&b(-1,m-3,0)-3b(-1,m-2,0)+3b(-1,m-1,0).
\end{align}
From the Appendix,  $b(-1,0,0)=12$, $b(-1,1,0)=36$, and $b(-1,2,0)=69$, all  divisible by $3$. By the recurrence  above,  $3\mid b(-1,m,0)$ for all $m\in\mathbb{N}$. Hence, \eqref{even-special} holds for $\alpha=s$.

Now suppose \eqref{L-even} and \eqref{even-special} hold for $\alpha\leq s$. Then by Definition \ref{r:defL} and Lemma \ref{U-fundamental},
\begin{align*}
L_{2s+1}=&U_A(L_{2s})\\
=&\sum_{k\geq0}d_k^{(2s)}U_A(p_1y^{3^{2s+1}-3}t^k)\\
=&p_0y^{3^{2s+2}-1}\sum_{k\geq0}d_k^{(2s)}\sum_{n\geq \lceil\frac{k-3}{3}\rceil}a(k,3^{2s+1}-3,n)t^n\\
=&p_0y^{3^{2s+2}-1}\sum_{n\geq-1}\sum_{0\leq k\leq 3n+3}d_k^{(2s)}a(k,3^{2s+1}-3,n)t^n .
\end{align*}
Equating  coefficients of $p_0y^{3^{2s+2}-1}t^n$ on both sides, we get
\begin{align}
\label{coeff-odd}d_{n}^{(2s+1)}=\sum_{0\leq k\leq 3n+3}d_k^{(2s)}a(k,3^{2s+1}-3,n).
\end{align}
According to \eqref{U-0-t}, \eqref{L-even}, and \eqref{even-special}, we obtain that $\pi(a(k,3^{2s+1}-3,n))\geq\lfloor\frac{2n-k+3}{2}\rfloor$, $\pi(d_k^{(2s)})\geq\lfloor\frac{2n+5}{3}\rfloor+s$ for $k\geq1$, and $\pi(d_0^{(2s)})\geq s+2$.
Thus, for $n\geq -1$,
\begin{align*}
\pi(d_n^{(2s+1)})&\geq\min_{k\geq0}\{\pi(d_k^{(2s)})+\pi(a(k,3^{2s+1}-3,n))\}\\
&\geq\min\{\min_{k\geq1}\{\pi(d_k^{(2s)})+\pi(a(k,3^{2s+1}-3,n))\},\pi(d_0^{(2s)})+\pi(a(0,3^{2s+1}-3,n))\}\\
&\geq\min\{\min_{k\geq1}\{\lfloor\frac{2k+5}{3}\rfloor+s+\lfloor\frac{2n-k+3}{3}\rfloor\}, s+2+\lfloor\frac{2n+3}{3}\rfloor \}\\
&\geq\min\{\min\{\min_{k\geq2}\{\lfloor\frac{2n+k+3}{3}\rfloor+s+1\},\lfloor\frac{2n+5}{3}\rfloor+s+1\},\lfloor\frac{2n+6}{3}\rfloor+s+1\}\\
&\geq \lfloor\frac{2n+5}{3}\rfloor+s+1,
\end{align*}
where the forth inequality is obtained by splitting the part where $k\textgreater 1$ into two cases: $k = 1$ and $k\geq 2$.
Thus, \eqref{L-odd} holds for $\alpha=s+1$.

From \eqref{coeff-odd}, we have
\begin{align*}
d_0^{(2s+1)}=&d_{0}^{(2s)}a(0,3^{2s+1}-3,0)+d_1^{(2s)}a(1,3^{2s+1}-3,0)+d_2^{(2s)}a(2,3^{2s+1}-3,0)\\
&+d_3^{(2s)}a(3,3^{2s+1}-3,0).
\end{align*}
By the induction hypothesis,  $\pi(d_{0}^{(2s)})\geq s+2$, $\pi(d_{1}^{(2s)})\geq s+2$, $\pi(d_{2}^{(2s)})\geq s+3$, and $\pi(d_{3}^{(2s)})\geq s+3$.

Similar to \eqref{coeff-eg}, we derive
\begin{align*}
a(0,m,0)=&a(0,m-3,0)-18a(0,m-3,-1)-3a(0,m-2,0)+9a(0,m-2,-1)\\
&+3a(0,m-1,0)+36a(0,m-1,-1),\\
a(1,m,0)=&a(1,m-3,0)-3a(1,m-2,0)+3a(1,m-1,0).
\end{align*}
From the Appendix,
\begin{align*}
&a(0,0,0)=285,\hspace{10pt}  a(0,1,0)=600,\hspace{10pt}a(0,2,0)=1068;\\
&a(1,0,0)=66,\hspace{16pt} a(1,1,0)=108,\hspace{10pt}a(1,2,0)=159.
\end{align*}
Hence,  $3\mid a(0,m,0)$ and $3\mid a(1,m,0)$ for all $m\in\mathbb{N}$, so  \eqref{odd-special} holds for $\alpha=s+1$. This completes the proof of Lemma \ref{main-lemma}.
\end{proof}

\section*{Appendix. The fundamental relations}

Group I:
\begin{align*}
\nonumber
y=\frac{1}{1-3t}, \hspace{10pt}p_0=(1+t)^4,\hspace{10pt}p_1=(1+t)^2,\hspace{10pt}L_0=t^{-1}+3^3+3t+3^2t^2.
\end{align*}

Group II:
\begin{align*}
U_A(p_1t^{-1})=&y^8p_0(11t^{-1}+38\cdot3^3+1085\cdot3^2t+212\cdot3^3t^2-961\cdot3^3t^3+98\cdot3^4t^4+3^7t^5);\\
U_A(p_1)=&y^8p_0(t^{-1}+95\cdot3+16\cdot3^5t+5\cdot3^3t^2-308\cdot3^3t^3+91\cdot3^4t^4-20\cdot3^5t^5+11\cdot3^5t^6-3^6t^7);\\
U_A(p_1t)=&y^8p_0(22\cdot3+176\cdot3^2t-11\cdot3^4t^2-19\cdot3^4t^3+8\cdot3^4t^4+79\cdot3^4t^5-88\cdot3^5t^6+17\cdot3^7t^7\\
&-2\cdot3^9t^8+11\cdot3^7t^9-3^8t^{10});\\
U_A(p_1yt^{-1})=&y^{11}p_0(14t^{-1}+101\cdot3^3+7760\cdot3^2t+188\cdot3^7t^2+9572\cdot3^3t^3-16102\cdot3^4t^4-128\cdot3^7t^5\\
&+20\cdot3^{10}t^6-14\cdot3^9t^7-3^{10}t^8);\\
U_A(p_1y)=&y^{11}p_0(t^{-1}+200\cdot3+284\cdot3^4t+5752\cdot3^3t^2+1234\cdot3^3t^3-5912\cdot3^4t^4+884\cdot3^5t^5\\
&+392\cdot3^{5}t^6-55\cdot3^6t^7);\\
U_A(p_1yt)=&y^{11}p_0(4\cdot3^3+802\cdot3^2t+751\cdot3^4t^2-224\cdot3^4t^3-1892\cdot3^4t^4+2116\cdot3^4t^5\\
&-514\cdot3^{5}t^6+16\cdot3^8t^7-32\cdot3^7t^8+14\cdot3^7t^9-3^8t^{10});\\
U_A(p_1y^2t^{-1})=&y^{14}p_0(17t^{-1}+215\cdot3^3+31742\cdot3^2t+148039\cdot3^3t^2+657251\cdot3^3t^3+119546\cdot3^4t^4\\
&-29324\cdot3^7t^5-1930\cdot3^9t^6+4745\cdot3^9t^7+241\cdot3^{10}t^8-10\cdot3^{14}t^9+17\cdot3^{12}t^{10}+3^{13}t^{11});\\
U_A(p_1y^2)=&y^{14}p_0(t^{-1}+356\cdot3+2963\cdot3^3t+17042\cdot3^4t^2+239206\cdot3^3t^3+16864\cdot3^4t^4\\
&-32678\cdot3^6t^5+692\cdot3^5t^6+35045\cdot3^6t^7-52\cdot3^{11}t^8-17\cdot3^{11}t^9+2\cdot3^{12}t^{10});\\
U_A(p_1y^2t)=&y^{14}p_0(53\cdot3+770\cdot3^3t+5827\cdot3^4t^2+29704\cdot3^4t^3-1958\cdot3^5t^4\\
&-103100\cdot3^4t^5+21722\cdot3^5t^6+392\cdot3^8t^7-967\cdot3^7t^8+98\cdot3^7t^9-3^8t^{10}).
\end{align*}

Group III:
\begin{align*}
U_B(p_0t^{-1})=&y^{0}p_1(4\cdot3-14\cdot3t+11\cdot3^2t^2-3^4t^3-2\cdot3^4t^4+5\cdot3^4t^5-3^5t^6);\\
U_B(p_0)=&y^{0}p_1(5-11\cdot3t+2\cdot3^4t^2-14\cdot3^3t^3-3^3t^4+8\cdot3^5t^5-13\cdot3^5t^6+5\cdot3^6t^8-3^7t^9);\\
U_B(p_0t)=&y^{0}p_1(1-3^2t+14\cdot3^2t^2-28\cdot3^3t^3+61\cdot3^3t^4+23\cdot3^4t^5-20\cdot3^6t^6+8\cdot3^7t^7\\
&+25\cdot3^6t^8-23\cdot3^7t^9+2\cdot3^8t^{10}+5\cdot3^8t^{11}-3^9t^{12});\\
U_B(p_0yt^{-1})=&y^{3}p_1(4\cdot3^2+88\cdot3t-55\cdot3^2t^2+8\cdot3^4t^3-10\cdot3^4t^4+8\cdot3^4t^5-3^5t^6);\\
U_B(p_0y)=&y^{3}p_1(8+46\cdot3t-16\cdot3^3t^2+38\cdot3^3t^3-64\cdot3^3t^4+5\cdot3^5t^5+8\cdot3^5t^6-8\cdot3^6t^7\\
&+8\cdot3^6t^8-3^7t^9);\\
U_B(p_0yt)=&y^{3}p_1(1+8\cdot3^2t-38\cdot3^2t^2+16\cdot3^4t^3-128\cdot3^3t^4+56\cdot3^4t^5+2\cdot3^7t^6-40\cdot3^6t^7\\
&+64\cdot3^6t^8-8\cdot3^7t^9-2\cdot3^9t^{10}+8\cdot3^8t^{11}-3^9t^{12});\\
U_B(p_0y^2t^{-1})=&y^{6}p_1(23\cdot3+802\cdot3t+923\cdot3^2t^2-140\cdot3^4t^3-7\cdot3^5t^4+38\cdot3^4t^5-3^5t^6);\\
U_B(p_0y^2)=&y^{6}p_1(11+274\cdot3t+14\cdot3^5t^2-31\cdot3^5t^3+314\cdot3^3t^4-49\cdot3^5t^5+62\cdot3^5t^6-19\cdot3^6t^7\\
&+11\cdot3^6t^8-3^7t^9);\\
U_B(p_0y^2t)=&y^{6}p_1(1+28\cdot3^2t+20\cdot3^4t^2-197\cdot3^3t^3+430\cdot3^3t^4-280\cdot3^4t^5+5\cdot3^8t^6-32\cdot3^6t^7\\
&-32\cdot3^6t^8+40\cdot3^7t^9-17\cdot3^8t^{10}+11\cdot3^8t^{11}-3^9t^{12}).
\end{align*}

\subsection*{Acknowledgements}
The first author was  supported by the National Key R\&D Program of China (Grant No. 2024YFA1014500) and the National Natural Science Foundation of China (Grant No. 12201387).






\end{document}